\keywords{Van Kampen Cocone, Presheaf Topos, Fibred Semantics}
\newcommand{\e}{\varepsilon} 			
\newcommand{\N}{\mathbb N}
\newcommand{\G}{\mathbb G} 			
\newcommand{\C}{\mathbb C} 			
\newcommand{\B}{\mathbb B} 			
\newcommand{\I}{\mathbb I} 					
\newcommand{\DA}{{\textswab 2}} 			
\newcommand{\FD}{\mathcal D}   		
\newcommand{\FE}{\mathcal E}   		
\newcommand{\FF}{\kappa_\ast}		
\newcommand{\FU}{\mathcal U}		
\newcommand{\pre}[1]{Set^{{#1}}}   
\newcommand{\Mapping}[3]{\xymatrix{{#1} \ar[r]^{{#2}} & {#3}}}
\newcommand{\IsoMapping}[3]{\xymatrix{{#1} \ar@{ >->>}[r]^{{#2}} & {#3}}}
\newcommand{\GO}[1]{\G\hspace{-0.3ex}\downarrow\hspace{-0.3ex}{#1}} 
\newcommand{\CO}[1]{\C\hspace{-0.3ex}\downarrow\hspace{-0.3ex}{#1}} 
\newcommand{\ol}[1]{\overline{#1}}
\newcommand{\myref}[1]{(\ref{#1})}
\newcommand{\txtnode}[2]{\fbox{\txt<#1>{#2}}}
\newcommand{\cb}[2]{#1{:}\,#2}
\newcommand{\Double}[4]{
\xymatrix{#1 \ar[r]_{#2} \ar@<1.ex>[r]^{#3} & #4}
}
\newcommand{\DoubleWithCocone}[7]{
\xymatrix{#1\ar@/_1.2pc/[rrr]_{#7} \ar[rr]_(0.65){#2} \ar@<1.ex>[rr]^(0.65){#3}
&& #4\ar[r]^{#5} & #6} }
\newcommand\ingr[2]{\includegraphics[height = #1in]{Images/#2}}
\begin{document} 

\title[Van Kampen Colimits and Path Uniqueness]{Van Kampen Colimits and Path Uniqueness\rsuper*}

\titlecomment{{\lsuper*}This paper is an extended version of the CALCO '17 paper ''{B}eing {V}an {K}ampen is a {U}niqueness {P}roperty in {P}resheaf {T}opoi'' \cite{kw17Calco}}

\author[H.\ K{\"o}nig]{Harald K{\"o}nig}	
\address{Department of Informatics, University of Applied Sciences FHDW Hannover, Freundallee 15, 30173 Hannover, Germany}	
\email{harald.koenig@fhdw.de}  

\author[U.\ Wolter]{Uwe Wolter}	
\address{Department of Informatics, University of Bergen, P.O.Box 7803, 5020 Bergen,  Norway}	
\email{Uwe.Wolter@uib.no}  

\begin{abstract}
Fibred semantics is the foundation of the model-instance pattern of software engineering. Software models can often be formalized as objects of {\em presheaf topoi}, i.e, categories of objects that can be represented as algebras as well as coalgebras, e.g., the category of directed graphs. Multimodeling requires to construct {\em colimits} of models, decomposition is given by {\em pullback}. Compositionality requires an exact interplay of these operations, i.e., diagrams must enjoy the {\em Van Kampen} property. However, checking the validity of the Van Kampen property algorithmically based on its definition is often impossible.
\par
In this paper we state a necessary and sufficient yet efficiently checkable condition for the Van Kampen property to hold in presheaf topoi. It is based on a uniqueness property of path-like structures within the defining congruence classes that make up the colimiting cocone of the models. We thus add to the statement ''Being Van Kampen is a Universal Property'' by Heindel and Soboci\'{n}ski the fact that the Van Kampen property reveals a presheaf-based structural {\em uniqueness} feature.
\end{abstract} 


\maketitle
\section{Introduction}
A presheaf topos is a category, that is based on an algebraic signature with unary operation symbols. Presheaves can also be considered as intersection of algebras and coalgebras \cite{Kahl2014}. Van Kampen Colimits are a generalization of Van Kampen squares \cite{Sobo2004}. In \cite{WK2013} we gave a necessary and sufficient condition for a pushout to be a Van Kampen square in a presheaf topos. In the present paper a corresponding criterion is given for all colimiting cocones.
\subsection{Motivation}
Software engineering and especially model-driven software development requires the decomposition of large models into smaller components, i.e., successful development of large applications requires system design fragmentation. Vice versa, a comprehensive viewpoint of a related ensemble of heterogenous software-engineering components is taken up by considering the {\em amalgamation} (union) of these artefacts modulo their relations amongst each other. This assembly shall not only be carried out on a syntactical level (models), but in the same way on the semantical level (instances). This interplay between assembly and disassembly shows that composition and {\em correct} decomposition of an instance of a model into instances of the model components always accompany each other. It can be shown that correctness, i.e., {\em compositionality} \cite{EGW1998,Fiadeiro2005} is not always guaranteed \cite{DBLP:books/daglib/0028275}.
\par
{\em Fibred semantics} adheres to the model-instance pattern, a standard viewpoint in software engineering: A model $M$ is an object of an appropriate category $\C$, semantics is given by the comma category $\CO{M}$. In each object $\tau\in \CO{M}$, $\tau:I\to M$, $I$ is the instance structure and $\tau$ is its typing. Amalgamation is colimit (of the arrangement of components) and decomposition is performed by taking pullbacks along the cocone morphisms of the colimit.
\par
To wit: Compositionality means that colimit of semantics (instances) is controlled by colimit of syntax (models) such that pullback of the instance colimit retrieves the original instances. Thus compositionality is equivalent to the {\em Van Kampen property} \cite{VKAsBicolimits}, an abstract characteristic which determines an exactness level for the interaction of colimits and pullbacks. It is thus often necessary to check validity of this property. However, since the definition of the property comes in terms of an equivalence of categories, see Def.\ref{def:VKCocone} in the present paper, algorithmic verification based on the definition is hard even for a finite number of finite models, because the involved comma categories are infinite nevertheless.
\par
Artefacts like UML- or ER-models are based on directed multigraphs, which in turn can be coded as a functor category $Set^\B$, where $\B$ has objects $E$ (edges) and $V$ (vertices) and non-identical arrows $s,t:E\to V$. More general metamodels, however, use more sophisticated categories $\B$, such as E-graphs for attributed graphs \cite{Ehrig2006-foagt}, bipartite graphs for Petri nets \cite{Ehrig2006-foagt}, or more complex structures for generalized sketches \cite{WD2008_a}. Hence, $Set^\B$ with $\B$ an arbitrary small category, will be the underlying category for the forthcoming investigations.
\par
Constructing colimits in a category $\C$ is an operation on {\em diagrams}, which are usually coded as functors from a small schema category $\I$ to $\C$. In order to make our results usable for software engineering, we use the older definition for diagrams: Instead of a small category, the schema $\I$ is a finite multigraph and a diagram is a graph morphism from $\I$ to $\C$ \cite{MacLane-1998}\footnote{
More precisely to the underlying graph of $\C$, see Sect.\ref{sect:prelim}}. The practical construction of colimits relies on {\em mapping paths}, i.e., chains of pairs of elements that are mapped to each other by the morphisms in the diagram, cf. Def.\ref{def:Weak-Mapping-Path} in Sect.\ref{sec:main}. Thus, colimit computation can easily be carried out algorithmically, if the diagram is finite and consists of finite artefacts.
\par
Summary: While colimit construction is easy, compositionality check (validation of the Van Kampen property) is hard. The first {\em contribution of the present paper} is a theorem (Theorem \ref{theo:main} in Sect.\ref{sec:main}), which states that a colimit in a presheaf topos has the Van Kampen property if and only if there are no ambiguous mapping paths between any pair of elements of the coproduct of the model artefacts. Thus the implementation of the colimit operation on the model level already provides the material for more efficient compositionality checking.
\par 
The second contribution is a practical algorithm, which efficiently verifies whether {\em compositionality} holds, i.e.\ whether the Van Kampen (henceforth often abbreviated ''VK'') property is satisfied. We provide a technique which combines (1) a more efficient colimit computation and (2) a simultaneous check of the VK-property.
\par
The paper is organized as follows: Sect.~\ref{sect:prelim} introduces notation and background information, Sect.~\ref{sec:main} presents the main theorem and applies it to a Software Engineering problem. Sect.~\ref{sec:to-colimits} sketches the proof idea and thus provides insight in the corresponding underlying foundations: We use a former result, in which a necessary and sufficient criterion is given for pushouts \cite{WK2013}. This result is translated to coequalizers and, finally, lifted to colimits of arbitrary diagrams. Sect.~\ref{sec:counterex} exemplifies the limitation of Theorem~\ref{theo:main}: It turns out that it is crucial to claim that the underlying category is a presheaf topos and the examples show that this is the best one can achieve. The announced practical guidelines are contained in Sect.~\ref{sec:practice}. Sect.~\ref{sec:conc} concludes with a short discussion of future research directions. 
\par 
Proofs in the  present paper are only sketched. A reader who is interested in the corresponding detailed proofs is referred to the technical report \cite{kw17}.  

\subsection{Related Work}
The Van Kampen property has its origin in algebraic topology: Topological spaces $X$ can be investigated by a covering family of $X$ which are related by their inclusions. Topological properties are expressed with the help of the fundamental groupoid. The Van Kampen Theorem \cite{MayAlgTop} states that the colimit of the fundamental groupoids of all covering spaces is the fundamental groupoid of $X$, thus inferring global properties from local ones. The original idea was stated by Seifert \cite{VKOriginal} for pushouts and was further elaborated by Van Kampen \cite{VKOriginal2}. 
\par
Inferring global properties from local ones is the heart of sheaf theory \cite{MacLane-1992}. The fibred view on sheaves is discussed in \cite{Vis15}. The application of Van Kampen's ideas to graphical modeling and to Software Engineering was invented in \cite{LS04,Sobo2004} and then further detailed in \cite{Ehrig2006-foagt} for the theory of Graph Transformations. That extensive categories and especially topoi are a reasonable playground for these theories is shown in \cite{BL2003,LS06}.
\par
{\em Amalgamation} is a requirement for a collection of artefacts in computer science \cite{EGW1998,Fiadeiro2005} which has been connected to the Van Kampen property in \cite{WK2013}. The same property is called {\em exactness} in institution theory \cite{DBLP:books/daglib/0028275}. Being ''non-exact'' seems to be a typical deficit of (1-, 2-, 3-\ldots)categories, because Lurie shows that Higher Van Kampen theorems hold in greater generality in $\infty$-topoi \cite{Lurie}.  
\par 
The importance of finding a feasible condition to check the Van Kampen property was caused by investigations of new methods in Graph Transformations \cite{DBLP:conf/gg/KonigLSW14,L-10}. That the Van Kampen property can be characterized as a bicolimit in a comprising span bicategory \cite{VKAsBicolimits} is a fundamental statement. Moreover, the Van Kampen property has been investigated in more special contexts \cite{DBLP:journals/jlp/Kahl11} and can also be described with the help of weak 2-limits in $CAT$ (https://ncatlab.org/nlab/show/van+Kampen+colimit). However, all these characterisations can hardly be applied in practice. 
\par 
The present paper is an extended version of the CALCO '17 paper ''Being Van Kampen is a Uniqueness Property in Presheaf Topoi''. In addition to the conference version, we added examples showing the limitations of our results in Sect.~\ref{sec:counterex} and we bridge the gap to concretely applicable algorithms (in Sect.~\ref{sec:practice}) of the theoretical results of the paper. 

\xymatrixrowsep{1.5pc} 
\xymatrixcolsep{1.5pc}
\section{Preliminaries}\label{sect:prelim}
This chapter recapitulates the most important notation for the following elaboration. For any category $\C$, $X\in \C$ means that $X$ is contained in the collection of objects in $\C$. A {\em diagram} in $\C$ is based on a directed multigraph $\I$, the schema for the diagram. We write $\I_0$ and $\I_1$ for the sets of vertices and edges of $\I$. Formally, a diagram $\FD:\I\to \FU(\C)$ is a graph morphism where $\FU$ denotes the forgetful functor assigning to each category its underlying graph. For convenience reasons, however, the forgetful functor will be omitted, i.e., diagrams will be denoted $\FD:\I\to \C$. This definition is used instead of the one, where $\I$ is a schema {\em category} rather than a graph, because it will turn out, that the results in this paper can easier be stated. The notions of (co-)cones and (co-)limits is the same modulo the adjunction ${\mathcal F}\dashv \FU$ where ${\mathcal F}:$ {\sc Graphs} $\to$ {\sc Cat} assigns to any graph its freely generated category, see \cite{MacLane-1998}, III, 4 for more details. Another advantage of this definition occurs in software engineering: Although the schema graph is finite, ${\mathcal F}(\I)$ may have infinitely many arrows.
\par
Vertices of $\I$ play the role of indices for diagram objects, hence, we use letters $i,j,\ldots$ for vertices. Edges of $\I$ will be depicted $\Mapping{i}{d}{j}$ and we write $i = s(d)$, $j = t(d)$ (source and target of $d$). Images of edges under a diagram $\FD: \I\to \C$ will be denoted $\Mapping{\FD_i}{\FD_d}{\FD_j}$ (slightly deviating from the usual notation $\FD(i), \FD(d)$, etc).
\par
Let $\FE, \FD: \I \to\C$ be two diagrams, then a family
\[\tau = (\tau_i: \FE_i\to \FD_i)_{i\in \I_0}\]
of $\C$-morphisms with $\tau_j\circ \FE_d = \FD_d\circ \tau_i$ for all edges $\Mapping{i}{d}{j}$ in $\I_1$ will be called a {\em natural transformation} between the diagrams and will be denoted in the usual way $\tau:\FE\Rightarrow \FD$. For any $S\in \C$, $\Delta S: \I\to \C$ denotes the constant diagram, which sends each edge of $\I$ to $id_S$. $S$ (as $\C$-object) and $\Delta S$ (as diagram) will be used synonymously. Diagrams together with natural transformations constitute the category $\C^\I$. Note that $\Delta: \C\to \C^\I$ is itself a functor, assigning to each object of $\C$ its constant diagram and to an arrow $f:A\to B$ the ''constant'' natural transformation $(f)_{i\in\I_0}$.
\par
We assume all categories under consideration to have colimits. The coproduct cocone of a family $(\FD_i)_{i\in I}$ of $\C$-objects will be denoted
\[(\Mapping{\FD_i}{\subseteq_i}{\coprod_{j\in I}\FD_j})_{i\in I}.\]
The morphisms $\subseteq_i$ are called coproduct injections. For a family of arrows $(f_i:\FD_i\to A)_{i\in I}$ we write $\vec{f}:\coprod_{i\in I}\FD_i \to A$ for the resulting unique mediating arrow.
\par
We assume all categories under consideration to have pullbacks. In the sequel, we will work with {\em chosen pullbacks}, i.e., for each pair of $\C$-arrows $B\stackrel{h}{\to} A \stackrel{k}{\leftarrow} X$ a choice
\[
\xymatrix{
Y\ar[r]^{h'}\ar[d]_{h^\ast(k)} & X\ar[d]^{k} \\
B\ar[r]_h & A
}
\]
of pullback span $(h^\ast(k), h')$ is determined once and for all. For all $h:B\to A$, $h^\ast(id_A)$ shall be chosen to be $id_B$. Whenever we deviate from these choices, this will be emphasized. It is well-known \cite{Goldblatt1984-t} that for fixed $h:B\to A$ chosen pullbacks along $h$ give rise to a (pullback) functor $h^\ast:\CO{A}\to \CO{B}$ between comma categories. Pullbacks can be composed, i.e., if $C\stackrel{h_2}{\to} B \stackrel{h_1}{\to} A$, then $h_2^\ast\circ h_1^\ast$ yields a pullback along $h_1\circ h_2$, and decomposed, i.e., if $h_1^\ast(k)$ and $(h_1\circ h_2)^\ast(k)$ are computed, the resulting universal arrow from the latter into the former pullback yields a pullback of $h_2$ and $h_1^\ast(k)$. Note, that in both cases the automatically appearing pullbacks need not be chosen.
\par
The underlying category for all further considerations is a category of {\em presheaves}, i.e., the category $\G:=Set^\B$ (with $\B$ a small (base) category, $Set$ the category of sets and mappings) of covariant functors from $\B$ to $Set$ together with natural transformations between them\footnote{
Normally presheaves are categories $Set^{\B^{op}}$, i.e., contravariant $Set$-valued functors. But we prefer the slightly deviating definition, because we found the contravariant version counterintuitive for our work. Clearly, it is easy to switch to the contravariant setting, if one inverts all arrows of $\B$.}. We will also use the term ''sort'' for the objects in $\B$ and the term ''operation (symbol)'' for the morphisms in $\B$. It is folklore that $\G$ has all  colimits and all pullbacks, which are computed sortwise, resp. $\G$ is a topos, i.e.\ a category with finite limits and colimits, which has exponents and where the subobject functor is representable \cite{Goldblatt1984-t}. $\G$ will thus also be called a {\em presheaf topos}. E.g., the category of multigraphs is a presheaf topos with $\B = (\Double{E}{t}{s}{V})$ (plus identities). The simplest presheaf topos is $Set$ ($\B = 1$, the one-object-one-morphism category).
\par
In this paper, we will make frequent use of (sortwise) coproducts, i.e., disjoint unions of sets. In order to make argumentations simpler, we will assume that for each $X\in\B$ the artefacts $(\FD_i(X))_{i\in\I_0}$ are a priori disjoint, i.e., the coproduct is obtained by simple union.
\par
An important property of presheaf topoi is (infinite) {\em extensivity}, i.e., the functor
\begin{equation}\label{eqn0.5}
\coprod: \prod_{i\in I}\GO{D_i} \to \GO{\coprod_{i\in I}D_i}
\end{equation}
assigning to each object $(f_i:A_i\to D_i)_{i\in I}$ in $\prod_{i\in I}\GO{D_i}$ the object $\coprod_{i\in I}f_{i}:\coprod_{i\in I}A_i\to\coprod_{i\in I}D_i$ in $\GO{\coprod_{i\in I}D_i}$, is an equivalence of categories for each index set $I$ and each $I$-indexed family $(D_i)_{i\in I}$ of objects in $\G$. Its ''inverse'' arises from constructing pullbacks along coproduct injections. From these facts one derives the stability of coproducts under pullbacks, i.e., if
\begin{equation}\label{eqn0.7}
\xymatrix{A_i\ar[r]^{a_i}\ar[d]_{f_i} & A\ar[d]^g
  &&& \coprod_{i\in I}A_i\ar[rr]^{\vec{a}}\ar[d]_{\coprod_{i\in I}f_i} && A\ar[d]^g\\
  				M_i\ar[r]^{g_i}				& M
  &&& \coprod_{i\in I}M_i\ar[rr]^{\vec{g}}				&& M				
  }
\end{equation}
are commutative diagrams, then the squares on the left-hand side are pullbacks for all $i\in I$, if and only if the square on the right-hand side is a pullback \cite{Goldblatt1984-t}. In general topoi, all these statements still hold for finite index sets $I$ (finite extensivity).


\section{An Equivalent Condition for the Van Kampen Property}\label{sec:main}
In this chapter we introduce the Van Kampen property and state the main result of this paper, a necessary and sufficient condition for the Van Kampen property to hold in $\G = Set^\B$.

\subsection{Van Kampen Colimits}\label{sec:VKColimits}
A commutative cocone out of a diagram $\FD:\I\to\G$ is a natural transformation
\begin{equation}\label{eqn:cocone}
\kappa:\FD\Rightarrow \Delta S.
\end{equation}
For fixed $\Mapping{i}{d}{j}$ of $\I_1$, pulling back a $\G$-arrow $\Mapping{K}{\sigma}{S}$ along $\kappa_i$ and $\kappa_j$ yields
\begin{equation}\label{eqn:pb-pointwise}
\xymatrix{
   \FE_{i} \ar@{-->}[rr]^{\FE_d} \ar@/^1.5pc/[rrrr]^(.2){\kappa'_i}
   \ar[d]_{\kappa_i^\ast(\sigma)}
&& \FE_j\ar[d]^{\kappa_j^\ast(\sigma)} \ar[rr]^{\kappa'_j}
&& K \ar[d]^\sigma
\\
   \FD_{i} \ar[rr]^{\FD_d} \ar@/_1.5pc/[rrrr]_(.2){\kappa_i}
&& \FD_j\ar[rr]^{\kappa_j}
&& S
}
\end{equation}
where the right and the outer rectangles are chosen pullbacks, $\FE_d$ is the unique completion into the right pullback, and the resulting left square is a pullback by the pullback decomposition property. The left square may, however, not be a chosen one, but it results in diagram $\FE$ as well as natural transformation $\kappa^\ast(\sigma):=(\kappa_i^\ast(\sigma))_{i\in \I_0}:\FE\Rightarrow \FD$, whose naturality squares are pullbacks. This fact gives rise to the following definition:
\begin{defi}[Cartesian Transformation] A natural transformation $\tau: \FE\Rightarrow \FD: \I\to \G$ is called {\em
cartesian} if all naturality squares are pullbacks.
\end{defi}
For a fixed diagram $\FD:\I\to \G$ let $\G^\I\Downarrow\FD$ be the full subcategory of $\G^\I\downarrow\FD$ of {\em cartesian} natural transformations. Thus, by \myref{eqn:pb-pointwise}, $\kappa^\ast$ maps objects of $\GO{S}$ to objects in $\G^\I\Downarrow\FD$. Moreover, any arrow $\gamma: \sigma\to \sigma'$ of $\GO{S}$ yields a family of arrows $(\kappa_i^\ast(\gamma))$ (universal arrows into pullbacks) of which it can easily be shown that together they yield a cartesian natural transformation $\kappa^\ast(\gamma):\kappa^\ast(\sigma)\to \kappa^\ast(\sigma')$. Thus $\kappa^\ast$ becomes a functor
\begin{equation}\label{equ:pulling-back-functor}
  \kappa^\ast: \GO{S}\to \G^\I\Downarrow\FD.
\end{equation}
\begin{defi}[Van Kampen Cocone, \cite{VKAsBicolimits}]\label{def:VKCocone}
Let $\FD:\I\to\G$ be a diagram and $\kappa:\FD\Rightarrow\Delta S$ be a commutative cocone. Then $\kappa$ has the {\em Van Kampen (VK) Property} (''$\kappa$ is VK'') if functor $\kappa^\ast$ is an equivalence of categories.
\end{defi}
As usual, a {\em colimit} (or {\em colimiting cocone}) is a universal cocone $\kappa: \FD\Rightarrow \Delta S$, i.e., for each $T\in \G$ and commutative cocone $\rho:\FD\Rightarrow \Delta T$, there is a unique $\G$-morphism $\Mapping{S}{u}{T}$ such that $\Delta u\circ \kappa = \rho$, i.e., $u\circ \kappa_i = \rho_i$ for all $i\in \I_0$. $S$ is called the {\em colimit object}.
\par
$\kappa^\ast$ has a left-adjoint $\kappa_\ast: \G^\I\Downarrow \FD \to \GO{S}$ which assigns to a cartesian natural transformation $\tau:\FE\Rightarrow \FD$ the unique arrow to $S$ out of the colimit object of the colimiting cocone of $\FE$ \cite{Sobo2004}. I.e., $\kappa_\ast$ is the (pseudo-)inverse of $\kappa^\ast$, if the VK property holds. In this case, unit and counit of the adjunction are isomorphisms. Note also that each VK cocone $\FD\Rightarrow \Delta S$ is automatically a colimit (apply $\kappa^\ast$ to $id_{\Delta S}$ and use the definition of $\FF$) such that we can use the terms ''Van Kampen cocone'' and ''Van Kampen colimit'' synonymously.
\par
Whereas the counit of this adjunction is always an isomorphism, if pullback functors have right-adjoints (and thus preserve colimits), which is true in every (presheaf) topos \cite{Goldblatt1984-t}, the situation is more involved concerning the unit of the adjunction: The easiest example of the VK property arises for the empty diagram. In this case the property translates to the fact, that the {\em initial object} $0$ is strict, i.e., each arrow $\Mapping{A}{}{0}$ is an isomorphism. This is true in all topoi \cite{Goldblatt1984-t}. In the same way, since all presheaf topoi are extensive (cf. Sect.\ref{sect:prelim}), coproducts have the Van Kampen property. But the unit fails to be an isomorphism for pushouts and coequalizers: Even in $Set$ there are easy examples of pushouts which violate the VK property \cite{Sobo2004}. In {\em adhesive categories} (and thus in all topoi \cite{LS06}) pushouts are VK, if one leg is monic, by definition. Vice versa, there are also pushouts with both legs non-monic, which enjoy this property nevertheless \cite{WK2013}. Astonishingly, coequalizers seldom are VK: Consider the shape graph $\DA := \Double{1}{d}{d'}{2}$ and the diagram $\FD:\DA\to Set$ with $\FD_1 = \{\ast_1\}, \FD_2 = \{\ast_2\}$. Clearly,
\begin{equation}\label{eqn:double-id-coeq}
\Double{\FD_1}{}{}{\FD_2}\Mapping{}{}{\{\ast\}}
\end{equation}
is a coequalizer in $Set$. Then the cartesian transformation
\[\tau:(\Double{\FE_1 := \{a,b\}}{k}{id}{\FE_2 :=\{a,b\}})\Rightarrow \FD,\]
with $k$ the non-identical bijection of $\{a,b\}$, is mapped to $id_{\{\ast\}}$ by $\FF$, i.e., $\tau \not\cong (\kappa^\ast\circ \kappa_\ast)(\tau)$.

\subsection{Equivalent Condition}
As mentioned in the introduction it is important for several software engineering scenarios to
find an easily checkable criterion for the Van Kampen property. The presented condition of this paper comes in terms of the mapping behavior of all morphisms $\FD_d$ in the diagram.
\begin{defi}[Mapping Path]\label{def:Weak-Mapping-Path}
Let $\G = \pre{\B}$ be a presheaf topos and $\FD:\I\to\G$ be a diagram w.r.t.\ shape graph $\I$. Let $\I_1^{op} := \{d^{op}\mid d\in \I_1\}$.
\begin{itemize}
  \item A {\em Path Segment} of sort $X\in\B$ is a triple $(y,\delta, y')$ with $\delta\in \I_1\cup\I_1^{op}$ and\footnote{
    Whenever $\Mapping{i}{d}{j}\in \I_1$ and we apply a mapping in the family $((\FD_d)_X: \FD_i(X)\to \FD_j(X))_{X\in\B}$, we write $\FD_d$ instead of $(\FD_d)_X$.}
  \[\begin{array}{lcl}
    \mbox{If }\delta = d\in \I_1 & \mbox{ then } & y\in \FD_{s(d)}(X), y' = \FD_d(y)\in \FD_{t(d)}(X) \\
    \mbox{If } \delta = d^{op}\in \I_1^{op} & \mbox{ then } & y'\in \FD_{s(d)}(X), y = \FD_d(y')\in \FD_{t(d)}(X)
  \end{array}\]
  Two path segments $(y_1, \delta_1, y_1')$ and $(y_2, \delta_2, y_2')$ of sort $X$ are equal if $y_1 = y_2$, $\delta_1 = \delta_2$, and $y'_1 = y'_2$. Moreover, two path segments are {\em weakly equal},  $(y_1, \delta_1, y_1')=_w(y_2, \delta_2, y_2')$ in symbols, if $(y_1, \delta_1, y_1')=(y_2, \delta_2, y_2')$ or $(y_1, \delta_1, y_1')=(y_2', \delta_2^{op}, y_2)$.\footnote{$(d^{op})^{op}:=d$.}
  \item A {\em Non-empty Mapping Path in $\FD$} of sort $X\in\B$ is a sequence
  \[P=[(y_0, \delta_0, y_1), (y_1, \delta_1, y_2), (y_2, \delta_2, y_3), \ldots, (y_{n-1},
  \delta_{n-1}, y_n)]\]
  of path segments of sort $X$, where any third component of a segment coincides with the first component of its successor segment\footnote{
  By the introductory remarks on disjointness of artefacts, this means that the third component and the successor's first component are elements of the same $\FD_i$.}, and where $n\ge 1$.
  We say that the above path connects $y_0$ with $y_n$ in $\FD$.
  \item For each $y\in \FD_i(X)$, where $i\in\I_0$ and $X\in \B$, we say that the {\em Empty Mapping Path} $[\;]$ of sort $X$ connects $y$ with itself in $\FD$.
    \item Two paths are equal, if they have the same length and are segmentwise equal.
    \item A mapping path is {\em proper} if there are no two distinct path segments that are weakly equal.
\end{itemize}
\end{defi}
Examples of mapping paths for graphs are depicted in Fig.\ref{fig:example1} (the complete meaning of the contents of Fig.\ref{fig:example1} will be explained in the next section): There are two paths (one along the dashed path segments, the other one along the dotted segments) both connecting vertex ''Sort'' with vertex ''Type''. Each arrow depicts a path segment with first component the arrow's source and third component its target. The middle component is annotated near the arrows, resp., their names will be explained in the next section, as well.
\par
For any $X\in\B$, any $i,j\in \I_0$ and any $z\in\FD_i(X)$, $z'\in\FD_j(X)$ we write
$\; z\equiv_X z' \mbox{ (}z\equiv_X^p z'\mbox{)}$,
if there is a mapping path (proper mapping path) of sort $X$ connecting $z$ with $z'$. It is easy to see that $\equiv \;=\; \equiv^p$ and that this relation is a congruence relation on $\coprod_{i\in\I_0}\FD_i$ (i.e., a family of equivalence relations $(\equiv_X)_{X\in\B}$ compatible with operations of $\B$), because paths can be concatenated and reversed. Moreover, it is well-known \cite{MacLane-1998} that the colimiting cocone of diagram $\FD:\I\to\G$ is given by
\begin{equation}\label{eqn:colimit-comp}
\FD\stackrel{\kappa}{\Rightarrow}(\coprod_{i\in\I_0}\FD_i)/\equiv \;\;=(\coprod_{i\in\I_0}\FD_i)/{\equiv^p}
\end{equation}
where $\kappa_i = [\;]_{\equiv}\circ \subseteq_i$ with $[\;]_{\equiv}$ the canonical morphism.
In the present paper we will show that mapping paths also play a crucial role for a simpler characterization of the Van Kampen property. The following examples hint at this connection.
\begin{exa}\label{ex:simple-examples} Let $\G = Set$.
\begin{enumerate}
  \item\label{enum:ex1_1} In \myref{eqn:double-id-coeq} there are proper mapping paths $[]$ and $[(\ast_2, d^{op}, \ast_1),$ $(\ast_1, d', \ast_2)]$ both connecting $\ast_2$ with itself.
  \item\label{enum:ex1_2} The shape graph $\xymatrix{1 & 0\ar[l]_{d}\ar[r]^{d'} & 2}$ yields pushouts. The easiest example of a non-VK pushout arises from $\FD_0 = \{x,y\}, \FD_1 =\{\ast_1\}, \FD_2 = \{\ast_2\}$, cf. \cite{Sobo2004}. In this case, we obtain two different proper mapping paths $[(\ast_1,d^{op},x),(x,d',\ast_2)] \mbox{ and } [(\ast_1,d^{op},y),(y,d',\ast_2)]$
  both connecting $\ast_1$ and $\ast_2$ in $\FD$.
  \item\label{enum:ex1_3} Let $\I = \xymatrix{\bullet\ar@(ul,dl)_d}$ consist of one vertex and one loop. I.e., diagrams depict endomorphisms $f:A\to A$. It is astonishing that even the colimiting cocone $\xymatrix{\FD\ar@{=>}[r] & \{\ast\}}$ with $\FD_d = id_{\{\ast\}}$ is not VK: Take $\FE = (\xymatrix{\{a,b\}\ar[r]^k & \{a,b\}})$ (with $k$ the non-identity bijection of $\{a,b\}$), $\tau: \{a,b\}\to \{\ast\}$, then $\FE$'s colimit is a singleton. In this example, we have two proper mapping paths $[]$ and $[(\ast, d, \ast)]$ in $\FD$ both connecting $\ast$ with itself. Note that this is just another presentation of example \myref{eqn:double-id-coeq}, since the colimit of $f:A\to A$ can be obtained by the coequalizer of$\Double{A}{f}{id_A}{A})$.
  \item\label{enum:ex1_4} $\FD = (\Double{\{x\}}{f}{g}{\{y,z\}})$ with $f(x) =y, g(x) = z$ has the VK property, which can be checked by elementary means based on Def.~\ref{def:VKCocone}. There is exactly one proper mapping path connecting $y$ and $z$, namely $[(y, f^{op}, x), (x, g, z)]$. Moreover, there is exactly one proper path connecting $y$ with itself (namely the empty one, the hypothetical path $[(y, f^{op}, x), (x, f, y)]$ is not proper, see Def.\ref{def:Weak-Mapping-Path}). In the same way $x$ has only one path back to itself, namely the empty one (the hypothetical path $[(x, f, y), (y, f^{op}, x)]$ is not proper).
\end{enumerate}
\end{exa}
As suggested by these examples, {\em uniqueness of proper mapping paths} between two elements of the same sort $X$ in the sets $(\FD_i(X))_{i\in\I_0}$ is a crucial feature for the Van Kampen property to hold. Indeed, we will prove
\begin{thm}[VK is Path Uniqueness]\label{theo:main}
Let $\G = \pre{\B}$ be a presheaf topos and $\FD:\I\to\G$ be a diagram. Let
$\;\FD\stackrel{\kappa}{\Rightarrow} \Delta S\;$
be a colimiting cocone. The cocone is a Van Kampen cocone if and only if for all $X\in \B$, all $i,j\in\I_0$ and all $z\in\FD_i(X)$, $z'\in\FD_j(X)$:
There are no two different proper mapping paths in $\FD$ connecting $z$ and $z'$.
\end{thm}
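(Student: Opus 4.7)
The overall strategy is to reduce the general colimit to a standard coequaliser after a coproduct, exploit that coproducts in $\G=\pre{\B}$ are always VK by (infinite) extensivity, and then adapt the pushout characterisation of \cite{WK2013} to this coequaliser in a way that translates its path-uniqueness criterion back to mapping paths in the original diagram $\FD$. More concretely, write $A:=\coprod_{i\in\I_0}\FD_i$ and $B:=\coprod_{d\in\I_1}\FD_{s(d)}$ and form the parallel pair $l,r:B\rightrightarrows A$ where $l$ is built from the coproduct injections $\subseteq_{s(d)}$ and $r$ from $\subseteq_{t(d)}\circ\FD_d$. The colimit cocone $\kappa:\FD\Rightarrow\Delta S$ then factors through the canonical coequaliser $A\to S$, and every mapping path in $\FD$ corresponds bijectively to a zigzag in $(l,r)$, so proper path uniqueness in $\FD$ is equivalent to proper path uniqueness for the coequaliser pair.

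For the sufficiency direction I would verify directly that the unit $\eta_\tau:\tau\to\kappa^\ast\kappa_\ast(\tau)$ is an isomorphism for every cartesian $\tau:\FE\Rightarrow\FD$. Its component at index $i$ and sort $X$ sends $e\in\FE_i(X)$ to the pair $(\tau_i(e),[e])\in\FD_i(X)\times_{S(X)}K(X)$, where $K$ is the colimit object of $\FE$. For surjectivity, given $v\in\FD_i(X)$ and a class $[e']\in K(X)$ with $\kappa_i(v)=\kappa_\ast(\tau)([e'])$, take any mapping path in $\FD$ from $\tau_j(e')$ to $v$ (which exists by \myref{eqn:colimit-comp}) and lift its segments one at a time through the cartesian squares of $\tau$; the endpoint lands in $\FE_i(X)$ with the correct image and class. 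For injectivity, if $e_1,e_2\in\FE_i(X)$ share the $\tau$-image $v$ and the class in $K$, a mapping path in $\FE$ from $e_1$ to $e_2$ projects under $\tau$ to a closed path at $v$ in $\FD$; normalising to a proper $\FD$-path and applying uniqueness against the empty path forces the normal form to be empty, and each elementary cancellation in $\FD$ lifts through the pullback squares of $\tau$ to a cancellation in $\FE$, hence $e_1=e_2$.

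For the necessity direction I would argue by contraposition via the translated coequaliser criterion: two distinct proper mapping paths $P_1,P_2$ from $z$ to $z'$ in $\FD$ produce two distinct proper $(l,r)$-zigzags, and the construction of \cite{WK2013} (suitably adapted from pushouts to coequalisers) yields an explicit cartesian $\tau:\FE\Rightarrow\FD$ whose unit fails to be iso. Concretely one builds $\FE$ by taking, sortwise, two disjoint copies of the appropriate $\equiv$-class data glued along the common segments of $P_1,P_2$ but kept apart where the paths diverge; each naturality square becomes a pullback by careful use of extensivity (cf.\ \myref{eqn0.7}), yet the colimit of $\FE$ merges the two copies, so that pulling back $\kappa_\ast(\tau)$ reproduces a strictly coarser cartesian transformation, contradicting VK.

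The main obstacle, as in \cite{WK2013}, lies in the necessity direction: engineering $\FE$ so that \emph{every} naturality square of $\tau$ is a pullback while the two diverging proper paths remain genuinely separated in $\FE$ is a delicate combinatorial-categorical task, and this is precisely where the reduction through the single coequaliser pair pays off, since it imports the existing pushout witness from \cite{WK2013} rather than forcing a fresh construction for each shape $\I$. A secondary subtlety is that properness is defined by global pairwise non-weak-equality, not merely by being freely reduced, so in the sufficiency argument one must verify carefully that cancellation steps in $\FD$ performed during normalisation can be mirrored in $\FE$ using the cartesian pullback squares; this is what allows the empty-path conclusion in $\FD$ to propagate back to $e_1=e_2$ in $\FE$.
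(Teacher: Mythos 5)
Your overall route is the one the paper itself follows: present the colimit as the coequaliser of the pair $\vec{id},\vec{\FD_d}:\coprod_{d\in\I_1}\FD_{s(d)}\rightrightarrows\coprod_{j\in\I_0}\FD_j$, transfer both the Van Kampen property and the path condition across this presentation using extensivity, and import the pushout characterisation of \cite{WK2013} through the coequaliser-as-pushout correspondence. (Your sufficiency argument replaces the paper's categorical step --- the equivalence $\G^{\I}\Downarrow\FD\cong\G^{\DA}\Downarrow\ol{\FD}$ commuting with the two pullback functors --- by a direct element-level verification that the unit is iso; that is a legitimate variant, not a different method.) Two genuine gaps remain, however.

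First, the Van Kampen property is an equivalence of categories, so both the unit and the counit of $\kappa_\ast\dashv\kappa^\ast$ must be isomorphisms; your sufficiency direction verifies only the unit. In a presheaf topos the counit is automatically iso because pullback functors have right adjoints and hence preserve colimits, and the paper explicitly invokes this fact --- but as written your argument proves only half of the equivalence, and this is exactly the half that fails outside such categories (the $CAT$ example of Fig.~\ref{fig:example2}). One sentence citing this is needed. Second, in the necessity direction you feed ``two \emph{different} proper paths'' directly into the witness construction adapted from \cite{WK2013}, but that construction rests on a proper \emph{domain cycle}, which corresponds to two \emph{disjoint} proper mapping paths. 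Two different proper paths from $z$ to $z'$ may share segments, and the double-cover you describe (``two copies glued along the common segments'') need not be nontrivial unless the paths actually diverge into segment-disjoint subpaths somewhere. The paper closes this with a separate reduction: any two different proper paths between the same endpoints yield two disjoint proper paths, possibly between other elements. Without this step (or an equivalent), your contrapositive does not yet produce a cartesian $\tau$ whose unit fails to be iso.
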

Since, in colimit computations, all mapping paths need to be computed (see \myref{eqn:colimit-comp}), and -- according to Theorem \ref{theo:main} -- the Van Kampen property can be checked by means of mapping paths, algorithmic verification of the Van Kampen property can be carried out in the background of colimit computation. A detailed elaboration of these combined computations is carried out in Sect.\ref{sec:practice}.
\subsection{Application of Theorem \ref{theo:main}}\label{sec:appl}
In order to demonstrate the benefits of the path uniqeness criterion, we consider a more substantial example than the ones in Example \ref{ex:simple-examples}. We let $\B = (\Double{E}{t}{s}{V})$ ($id_E$ and $id_V$ not shown), thus our base presheaf topos is $\G = Set^\B$, the category of directed multigraphs. In the sequel, we depict vertices as rectangles and edges are arrows pointing from its source to its target. In Fig.\ref{fig:example1} the three highlighted graphs\footnote{These are not just graphs since they contain "multiplicity constraints". They can be formalized, actually, as generalized sketches, i.e., graphs with diagrammatic predicates, in the sense of \cite{WD2008_a}. } $\FD_1$, $\FD_2$, and $\FD_3$ depict meta-models for type systems:
\begin{figure}[!htb]
\begin{center}
\includegraphics[height = 2.8in]{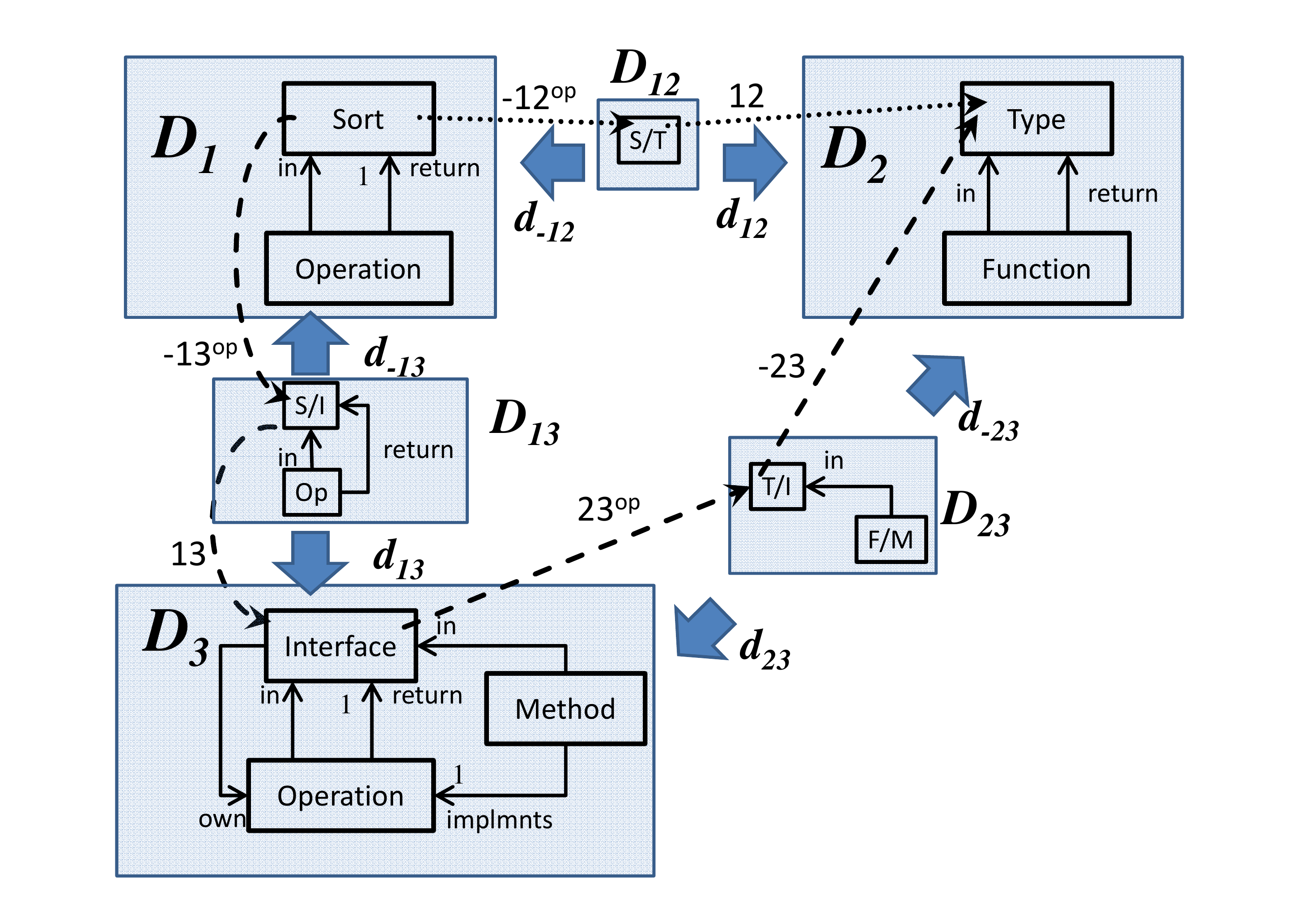}
\end{center}
\caption{A diagram of metamodels and two mapping paths}\label{fig:example1}
\end{figure}
\begin{itemize}
  \item $\FD_1$ represents parts of the domain of {\em algebraic specifications}: Operations have an arbitrary number of sort-typed input parameters and exactly one return parameter.
  \item In $\FD_2$ terminology of {\em abstract data types} is used: Functions have an arbitrary number of typed input and return parameters, resp.
  \item $\FD_3$ is the object-oriented view: Interfaces own operations, which have inputs and one return parameter typed in interfaces, resp. Methods implement operations, their input parameters may be of specialized type.
\end{itemize}
Figure \ref{fig:example1} represents a {\em multimodeling} scenario \cite{mehrdad-re07}. Reasoning about these collective models (the multimodel) as one artefact requires matching of different terminology of each of the model graphs: Sameness of terminology in graphs $\FD_1$ and $\FD_2$ is formally enabled by defining a relation on $\FD_1\times \FD_2$ by means of auxiliary graph $\FD_{12}$, which consists of exactly one vertex $S/T$, $d_{-12}(S/T) = Sort$, $d_{12}(S/T) = Type$, such that span $\FD_1\stackrel{d_{-12}}{\leftarrow}\FD_{12}\stackrel{d_{12}}{\to}\FD_2$ specifies sameness of terms ''Sort'' and ''Type'' in graphs $\FD_1$, $\FD_2$ and no other commonalities. In the same way span $\FD_1\stackrel{d_{-13}}{\leftarrow}\FD_{13}\stackrel{d_{13}}{\to}\FD_3$ specifies sameness of terms ''Sort'' and ''Interface'' (in $\FD_1$ and $\FD_3$) as well as ''Operation'' (in both graphs) together with the in- and return-relationships. Moreover, relation ''in'' of term ''Method'' in $\FD_3$ is declared to be equal to property ''in'' of term ''Function'' in $\FD_2$ via span $\FD_3\stackrel{d_{23}}{\leftarrow}\FD_{23}\stackrel{d_{-23}}{\to}\FD_2$.
\par
We now describe a scenario, in which colimit computation of the graphs in Fig.\ref{fig:example1} and amalgamation of instances typed over these graphs is important. It is common to reason about the multimodel by imposing constraints that spread over different models. We could, e.g., claim that ''The return type of a method's implemented operation (as specified in $\FD_3$) has to be contained in the list of return types of the corresponding function (as specified in $\FD_2$)''. In order to check this inter-model constraint, it is necessary to construct the diagram's colimit. Formally, for schema graph $\I = $
\[\xymatrix{
1 && 12\ar[ll]_{-12}\ar[rr]^{12} && 2 \\
& 13\ar[ul]^{-13}\ar[dr]_{13} && 23\ar[ur]_{-23}\ar[dl]^{23} & \\
&& 3 &&
}\]
we obtain diagram $\FD:\I\to \G$ and construct the colimiting cocone $\FD\stackrel{\kappa}{\Rightarrow}\Delta S$.
\par
Assume now that we want to check consistency of given typed instances $\tau_i:\FE_i\to \FD_i$, $i\in\{1,2,3\}$ against the above formulated constraint. For this we have to declare sameness of elements within $\FE_1, \FE_2$, and $\FE_3$ with the help of new relating typing morphisms $\tau_k:\FE_k\to \FD_k$, $k\in\{12,13,23\}$ and spans $\FE_1\stackrel{e_{-12}}{\leftarrow}\FE_{12}\stackrel{e_{12}}{\to}\FE_2$, $\FE_1\stackrel{e_{-13}}{\leftarrow}\FE_{13}\stackrel{e_{13}}{\to}\FE_3$, and $\FE_2\stackrel{e_{-23}}{\leftarrow}\FE_{23}\stackrel{e_{23}}{\to}\FE_3$. Of course, all $\tau_k$ have to be compatible with model matching and sameness declaration within $\FE_1, \FE_2$, and $\FE_3$, i.e., we obtain a natural transformation $\tau:\FE\Rightarrow\FD$ between diagrams of type $\I\to \G$. Consistency checking is then carried out by constructing the colimit object $K$ of $\FE$ and checking whether the resulting typing arrow $\sigma:K \to S$ fulfills the constraint, see \cite{mehrdad-re07}.
\par
Let us momentarily ignore constraint checking and just consider the relation between this amalgamated instance $\sigma$ and the original component instances $(\tau_i)_{i\in\{1,2,3,12,13,23\}}$: It is necessary to faithfully recover all $\tau_i$ from $\sigma$, otherwise we would loose information about the origin of the elements in the domain of $\sigma$. This means that we require, for all $i$, $\kappa_i^\ast(\sigma)\cong\tau_i$, i.e., the Van Kampen property for the cocone $\kappa$ has to hold. However, it turns out, that the property is violated: This can be seen by considering the following instance constellation (we write $\cb{x}{T}$ whenever $\tau_{\_}(x) = T$): Let $\FE_1(V) = \{\cb{s}{Sort}, \cb{s'}{Sort}\}, \FE_1(E) = \emptyset$, $\FE_2(V) = \{\cb{t_1}{Type}, \cb{t_2}{Type}\}, \FE_2(E) = \emptyset$, and $\FE_3(V) = \{\cb{\ol{i}}{Interface}, \cb{i}{Interface}\}, \FE_3(E) = \emptyset$. One may now declare sameness of elements within $\FE_1, \FE_2$, and $\FE_3$ as follows
\[s = t_1, s' = t_2 \mbox{ by span }(e_{-12},e_{12}); s = i, s' = \ol{i}\mbox{ by } (e_{-13},e_{13});  t_1 = \ol{i}, t_2 = i\mbox{ by }(e_{-23},e_{23}).\]
This is established as described above, e.g., graph $\FE_{12}$ consists of two vertices $\cb{1}{S/T}$ and $\cb{2}{S/T}$. Graph morphisms $e_{-12}$ maps $\cb{1}{S/T}\mapsto s$ and $\cb{2}{S/T}\mapsto s'$ wheras $e_{12}$ maps $\cb{1}{S/T}\mapsto t_1$ and $\cb{2}{S/T}\mapsto t_2$. We omit the obvious formal definitions of the other two spans.
\par
Unfortunately, by transitivity, this matching also yields $s=s'$, an unwanted anomaly. But in practice this effect may happen, if two modelers work separately: One modeler might define matches $(e_{-12},e_{12})$ and $(e_{-13},e_{13})$ and, independently and inadvertently, the second modeler defines the match $(e_{-23},e_{23})$. The inconsistent matching yields a colimit $K$ of $\FE$ with one vertex only, because each sort/type/interface is connected with each other along mapping paths. Clearly, $\kappa_i^\ast(\sigma)\not\cong\tau_i$ since $\kappa_i$ are monomorphisms, hence the domains of $\kappa_i^\ast(\sigma)$ are singleton sets, as well.
\par
In this simple example, a small instance constellation allowed for the detection of a VK violation witness. However, it is hard to determine such witnesses in more complex examples. In these cases, Theorem \ref{theo:main} is a more reliable indicator for VK validity or violation, because we do not need to find violating instance constellations. Instead, the violation of the VK property can be detected by analysing mapping path structures of the metamodels only. In the present example, the indicator are the {\em two different proper mapping paths} of sort $V$ shown in Fig.\ref{fig:example1} both connecting ''Sort'' and ''Type'' (one is depicted by dashed, the other one by dotted arrows) such that Theorem \ref{theo:main} immediately yields violation of the Van Kampen property.
At least from this example we derive the slogan that {\em the Van Kampen property holds, if there is no redundant matching information} in $\FD$. It is easy to see that the negative effect vanishes if we reduce the diagram accordingly, i.e. if we erase matching via $\FD_{12}$ since this information is already contained in the transitive closure of matchings $\FD_{13}$ and $\FD_{23}$. In this way, the above mentioned modelers can indeed work independently!


\section{An Outline of the Proof of Theorem \ref{theo:main}}\label{sec:to-colimits}
In this section, we sketch the main steps for the proof of our main theorem. Each step is given by a Lemma for which detailed proofs can be found in the technical report \cite{kw17}.
\subsection{Pushouts}\label{sec:chap2-VKPropDef}
Often, the Van Kampen property for pushouts is formulated as follows: A pushout of a diagram $\xymatrix{\FD_1 & \FD_0\ar[l]_{h_1}\ar[r]^{h_2} & \FD_2}$ is said to have the Van Kampen property if for any commutative cube
\[
\xymatrix{
& {\FE_0}\ar[dd]|{\hole}\ar[ld]\ar[rr]		& {}		& {\FE_2}\ar[dd]\ar[dl]	\\
{\FE_1}\ar[dd]\ar[rr]	&{} & {K}\ar[dd]^(0.3)\sigma& {} 	\\
& {\FD_0}\ar[rr]^(0.3){h_2}|{\hole}\ar[ld]_{h_1} 	& {} 		& {\FD_2} \ar[ld]^{\kappa_2}	\\
{\FD_1}\ar[rr]_{\kappa_1}	& 	& {S} 		& {} 	
}
\]
with this pushout in the bottom and back faces pullbacks, the front faces are pullbacks if and only if the top face is a pushout. In \cite{WK2013} we already stated a characterization of the Van Kampen property for pushouts based on this definition. It comes in terms of cyclic mapping structures within $\FD_0$:
\begin{defi}[Domain Cycle, \cite{L-10}]\label{def:domain-cycle}
Consider a span $\xymatrix{\FD_1 & \FD_0\ar[l]_{h_1}\ar[r]^{h_2} & \FD_2}$ in $\G = \pre{\B}$. For $X\in \B$ we call a sequence $[x_0,x_1,\ldots,x_{2k+1}]$ of elements of $\FD_0(X)$ a {\em domain cycle} (for the span $(h_1, h_2)$) (of sort $X$), if $k\in\N$ and the following conditions hold:
\begin{enumerate}
\item\label{cyc1} $\forall j\in \{0,1,\ldots, 2k+1\}: x_j\not=x_{j+1}$
\item\label{cyc2} $\forall i\in\{0,\ldots, k\}: h_1(x_{2i}) = h_1(x_{2i+1})$
\item\label{cyc3} $\forall i\in\{0,\ldots, k\}: h_2(x_{2i+1}) = h_2(x_{2i+2})$
\end{enumerate}
where ${2k+2} := 0$. A domain cycle is {\em proper} if $x_i\not= x_j$ for all $0\leq i<j\leq 2k+1$.
\end{defi}
The main outcome of \cite{WK2013} is the following fact:
\begin{lem}[Condition for VK Pushouts]\label{theo:VK-pushout}
A pushout
\[
\xymatrix{
\FD_0\ar[r]^{h_2}\ar[d]_{h_1} & \FD_2\ar[d]^{\kappa_2}
\\
\FD_1\ar[r]_{\kappa_1} & S
}
\]
in $\G = \pre{\B}$ is a Van Kampen cocone iff there is no proper domain cycle for $(h_1,h_2)$.\qed
\end{lem}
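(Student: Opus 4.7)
The plan is to reduce everything to a sortwise analysis in $Set$, since both the pushout $S$ and all pullbacks in $\G=\pre{\B}$ are computed componentwise for each sort $X\in\B$, with compatibility across the operations of $\B$ enforced by pullback functoriality. Since $\G$ is a topos, the counit of the adjunction $\kappa_\ast\dashv \kappa^\ast$ is always an isomorphism, so failure of the VK property amounts to the existence of a cartesian $\tau:\FE\Rightarrow\FD$ for which the unit $\tau\to \kappa^\ast(\kappa_\ast(\tau))$ is not an isomorphism. Throughout I would use the concrete sortwise description of the pushout in $Set$ as $(\FD_1\sqcup\FD_2)/{\sim}$, where $\sim$ is the smallest equivalence relation generated by $h_1(x)\sim h_2(x)$ for $x\in\FD_0$.

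For the ``only if'' direction I argue by contraposition: from a proper domain cycle $[x_0,x_1,\ldots,x_{2k+1}]$ of some sort $X$, construct a witness cartesian transformation $\tau$ exhibiting a VK failure. The idea is a \emph{two-sheeted cover} of the cycle. Set $\FE_0(X)$ to contain two disjoint copies $\{0,1\}\times\{x_0,\ldots,x_{2k+1}\}$, each projecting onto the corresponding $x_i$; define $\FE_1,\FE_2$ so that the $h_1$-leg preserves the sheet index while the $h_2$-leg swaps it. Thus $h_1(x_{2i})=h_1(x_{2i+1})$ lifts to same-sheet identifications in $\FE_1$, while $h_2(x_{2i+1})=h_2(x_{2i+2})$ lifts to cross-sheet identifications in $\FE_2$. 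Extend $\FE$ to the remaining sorts of $\B$ by pullback closure so that cartesianness is automatic. Traversal of the cycle produces an odd number of sheet swaps, collapsing the two sheets in the pushout object $K$ of $\FE$, whence $\kappa^\ast(\kappa_\ast(\tau))$ has one preimage per cycle element where $\tau$ had two, ruling out $\tau\cong\kappa^\ast(\kappa_\ast(\tau))$.

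For the ``if'' direction, assume no proper domain cycle for $(h_1,h_2)$ exists and let $\tau:\FE\Rightarrow\FD$ be cartesian. Sortwise, the pushout of $\FE$ is $(\FE_1\sqcup\FE_2)/{\sim_\FE}$. I would show that the canonical comparison maps $\FE_i\to (\kappa^\ast(\kappa_\ast(\tau)))_i$ are bijections on each sort. The key step is that any zigzag in $\FE_1\sqcup\FE_2$ witnessing $\sim_\FE$ projects to a zigzag in $\FD_1\sqcup\FD_2$ witnessing $\sim$, and conversely any such $\FD$-zigzag lifts uniquely through the cartesian naturality squares of $\tau$. The absence of proper domain cycles is precisely what guarantees that these zigzag liftings cannot wrap around a non-trivial cyclic pattern in $\FD_0$, so no unintended identifications arise in $K$ beyond the cartesian data, and the cardinalities of preimages in $\FE_i$ match those in the pullback.

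The main obstacle is the ``only if'' construction: the sheet-swap recipe is intuitive on the single sort $X$ where the cycle lives, but one must (a) extend $\FE$ consistently to all sorts of $\B$ so that every naturality square in $\tau$ is a genuine pullback, (b) verify the parity argument carefully so that the two sheets truly collapse in $K$ yet remain distinct in $\FE_0(X)$, and (c) confirm that no other feature of $\FD$ spuriously re-identifies the sheets already inside $\FE$. Properness of the cycle---the condition $x_i\neq x_j$ for $i\neq j$---is crucial here, since repeating indices would make the sheet assignment inconsistent and cause the double cover to degenerate before the pushout is formed.
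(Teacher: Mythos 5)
The paper itself does not prove Lemma~\ref{theo:VK-pushout}: it imports the statement from \cite{WK2013} and merely remarks that it can be established either by elementary set-based arguments \cite{L-10} or by analysing forgetful functors between categories of descent data. So there is no in-paper proof to compare against; judging your direct attempt on its own merits, the ``if'' direction is the right shape (the zigzag projection/lifting argument is essentially the elementary proof, though its key combinatorial step is asserted rather than proved), but the ``only if'' direction has two genuine gaps.

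First, the parity claim is false in general. By Def.~\ref{def:domain-cycle} a proper domain cycle $[x_0,\dots,x_{2k+1}]$ contains exactly $k+1$ pairs identified by $h_1$ and $k+1$ pairs identified by $h_2$. If every $h_1$-pair lifts to a same-sheet identification and every $h_2$-pair to a cross-sheet one, a full traversal of the cycle performs $k+1$ sheet swaps, which is \emph{even} whenever $k$ is odd. For a $4$-element cycle ($k=1$) your double cover is then the disjoint union of two untwisted copies of the cycle; it is isomorphic to $\kappa^\ast(\sigma)$ for a two-point $\sigma$ and exhibits no VK failure. The twist has to be inserted at an odd number of identification pairs (one suffices), independently of $k$; ``swap on every $h_2$-pair'' does not achieve this. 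Second, ``extend $\FE$ to the remaining sorts of $\B$ by pullback closure so that cartesianness is automatic'' does not address the real difficulty: the lifted legs $\FE_{h_1},\FE_{h_2}$ must be morphisms of presheaves. If you set $\FE_i(Y)=\FD_i(Y)\times\{0,1\}$ for all sorts $Y$ and twist $\FE_{h_2}$ only at sort $X$ on the cycle elements, then for any operation $u:X\to Y$ of $\B$ the naturality square of $\FE_{h_2}$ breaks ($u$ followed by the untwisted map at $Y$ preserves the sheet, while the twisted map at $X$ followed by $u$ flips it). Propagating the twist consistently along all operations of $\B$, while keeping every naturality square of $\tau$ a pullback and without the twist cancelling when cycle elements share images under some operation, is precisely the technical content that \cite{WK2013} handles with descent data and \cite{L-10} with a lengthy elementary analysis; it cannot be dismissed as automatic.
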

This result can be proven by means of elementary set-based arguments \cite{L-10}, but also by investigating forgetful functors between categories of descent data \cite{JT1994} for general topoi \cite{WK2013}.
\par
It is easy to see that the above definition for pushouts is an instance of the general definition of Van Kampen colimits in Def.~\ref{def:VKCocone}:
\begin{itemize}
  \item If the front faces are pullbacks, then the back faces are the result of applying $\kappa^\ast$. Then the counit $\e: \FF\circ \kappa^\ast\Rightarrow Id$ of adjunction is an isomorphism if and only if the cube's top face is already a pushout.
  \item If the top face is a pushout, then (up to isomorphism) $\sigma$ is the result of applying $\FF$. Then the unit $\eta: Id\Rightarrow \kappa^\ast\circ \FF$ is an isomorphism if and only if $\kappa^\ast(\sigma)$ produces the original cube up to isomorphism, i.e., the original front faces are pullbacks.
\end{itemize}
Hence, the two implications ''Front face pullbacks iff top face pushout'' actually reflect the two statements ''The counit is an isomorphism'' and ''The unit is an isomorphism''.
\par
Thus Lemma \ref{theo:VK-pushout} is a good starting point for the proof of Theorem \ref{theo:main}: We first transfer this knowledge to special mapping paths in coequalizer diagrams (Sect.\ref{sec:chap2-coequalizer}) and then from there to mapping paths in arbitrary colimits (Sect.\ref{sec:chap3-colimits}).

\subsection{From Pushouts to Coequalizers} \label{sec:chap2-coequalizer}
The transfer from pushouts to coequalizers is accomplished in two steps. The first step connects the VK property for coequalizers and pushouts:
\begin{lem}\label{lemma:coeq-pushout}
Let $\G$ be a general topos and $\Double{B}{f}{g}{D}$ be two arrows in $\G$. Let two arrows $\kappa_D:D\to S$ and $\kappa_B:B\to S$ be given such that the diagrams
\[\label{eqn:corresp-po}
\DoubleWithCocone{B}{f}{g}{D}{\kappa_D}{S}{\kappa_B}
\quad\quad
\xymatrix{
B + B \ar[r]^(0.6){[f,g]}\ar[d]_{[id,id]} & D\ar[d]^{\kappa_D} \\
B \ar[r]_{\kappa_B} & S
}
\]
are commutative, resp.
\begin{enumerate}
  \item\label{enum:coeq-pushout1} The left diagram is a coequalizer if and only if the right diagram is a pushout.
  \item\label{enum:coeq-pushout2} The left diagram is a VK cocone if and only if the right diagram is.
\end{enumerate}
\end{lem}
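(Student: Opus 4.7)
The plan for part~\myref{enum:coeq-pushout1} is the standard translation of universal properties. A cocone on the parallel pair $(f,g)$ is a morphism $\mu_D\colon D\to T$ with $\mu_D\circ f=\mu_D\circ g$, and a commutative square on the span $([f,g],[id,id])$ is a pair $(\mu_D\colon D\to T,\mu_B\colon B\to T)$ with $\mu_D\circ[f,g]=\mu_B\circ[id,id]$, i.e.\ $\mu_D\circ f=\mu_B=\mu_D\circ g$ by restriction to the two summands of $B+B$. These two data correspond bijectively ($\mu_B$ is forced to equal $\mu_D\circ f$), and a mediating morphism $S\to T$ makes the coequalizer triangle commute iff it makes both pushout triangles commute. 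Hence the left diagram is a coequalizer iff the right one is a pushout.

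For part~\myref{enum:coeq-pushout2}, the plan is to exhibit an equivalence of categories
\[
\Phi\colon\G^{\DA}\Downarrow\FD_c\;\;\simeq\;\;\G^{\I_p}\Downarrow\FD_p,
\]
where $\I_p=(1\leftarrow 0\rightarrow 2)$ and $\FD_c$, $\FD_p$ denote the coequalizer- and pushout-shaped diagrams of the statement, and to check that $\Phi$ intertwines the two pullback functors $\kappa^{\ast}_c$ and $\kappa^{\ast}_p$ into the common slice $\GO{S}$. Since an adjoint equivalence transports across such an intertwining, one functor is an equivalence if and only if the other is.

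Forwards, a cartesian transformation for the coequalizer consists of $\tau_1\colon E_1\to B$, $\tau_2\colon E_2\to D$ and maps $f',g'\colon E_1\to E_2$ filling pullback squares over $f$ and $g$. Set
\[
\Phi(\tau):=\bigl(E_1+E_1\to B+B,\;E_2\to D,\;E_1\to B\bigr)
\]
with typing $\tau_1+\tau_1$ on the apex and structure maps $[f',g']$ and $[id,id]$. The two back faces are then coproducts of the original pullback squares and are themselves pullbacks by stability of coproducts under pullback (eqn.~\myref{eqn0.7}). Backwards, given a cartesian pushout-shaped transformation $(E_0'\to B+B,\, E_1'\to D,\,E_2'\to B)$, finite extensivity splits $E_0'\cong F_1+F_2$ over the two injections of $B+B$; the naturality pullback over the codiagonal $[id,id]$, combined with stability, forces $F_1\cong F_2\cong E_2'$, and the naturality pullback over $[f,g]$ then decomposes into two pullback squares over $f$ and over $g$ sharing the common upper-left corner $E_2'$. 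The two assignments are mutually (pseudo-)inverse, yielding the equivalence.

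Commutation of $\Phi$ with $\kappa^{\ast}$ is then routine: for $\sigma\colon K\to S$, $\kappa^{\ast}_c(\sigma)$ consists of the chosen pullbacks of $\sigma$ along $\kappa_D$ and $\kappa_B$, whereas $\kappa^{\ast}_p(\sigma)$ additionally contains the pullback along $\kappa_B\circ[id,id]$, which by stability is canonically $\kappa_B^{\ast}(\sigma)+\kappa_B^{\ast}(\sigma)$, so $\Phi(\kappa^{\ast}_c(\sigma))\cong\kappa^{\ast}_p(\sigma)$ naturally in $\sigma$. I expect the main obstacle to be the bookkeeping of chosen versus induced pullbacks: the above isomorphism only holds up to the canonical comparison maps coming from the two choice systems, so one must verify that $\Phi$ is functorial on morphisms of the slices and that the intertwining triangle of functors commutes up to natural isomorphism — which is enough to transport the equivalence across.
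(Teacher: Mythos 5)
Your proposal is correct and follows essentially the same route as the paper: part~\myref{enum:coeq-pushout1} by the standard bijection of cocones, and part~\myref{enum:coeq-pushout2} by transferring the equivalence-of-categories formulation of Def.~\ref{def:VKCocone} across the two shapes using finite extensivity and the stability property \myref{eqn0.7}, which is precisely the argument the paper sketches (with details deferred to the technical report). Your explicit construction of the intertwining equivalence $\Phi$ and the treatment of chosen versus induced pullbacks fill in exactly the bookkeeping the paper omits.
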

\begin{proof}
\ref{enum:coeq-pushout1} is well-known \cite{MacLane-1998}. \ref{enum:coeq-pushout2} is proven by means of Def.\ref{def:VKCocone}, where the transfer is possible, because topoi are (finitely) extensive, cf.\ Sect.\ref{sect:prelim}, and especially because of property \myref{eqn0.7}.
\end{proof}
The second step establishes a connection between domain cycles and mapping paths. It comes in terms of {\em disjoint} mapping paths, i.e., paths $P_1$ and $P_2$ for which none of the path segments in $P_1$ is weakly equal\footnote{Recall the definition of weak equality in Def.~\ref{def:Weak-Mapping-Path}.} to a path segment in $P_2$. The proof is rather technical and will be omitted, see \cite{kw17}, Lemma 14.
 \begin{lem}[Domain Cycles vs.\ Mapping Paths]\label{lemma:dom-cyc-and-mapp-paths}
Let $\G = \pre{\B}$, $f$ and $g$ as in Lemma \ref{lemma:coeq-pushout}, and $X\in \B$. There is a proper domain cycle of sort $X$ for $\xymatrix{B & B+B\ar[r]^(0.6){[f,g]}\ar[l]_(0.6){[id,id]} & D}$, if and only if there are $z,z'\in D(X)$ and two disjoint proper mapping paths connecting $z$ and $z'$. \qed
\end{lem}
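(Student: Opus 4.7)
The proof rests on a correspondence between path segments in the coequalizer diagram (shape $\DA$, with $\FD_1=B$, $\FD_2=D$, and the two arrows sent to $f$ and $g$) and elements of $B+B$. Each segment $(y,\delta,y')$ of sort $X$ is characterised by its $B(X)$-endpoint $b$ and by its underlying edge $e\in\{f,g\}$; I assign it the element $(b,0)\in(B+B)(X)$ if $e=f$ and $(b,1)$ if $e=g$. Two segments yield the same $B+B$-element exactly when they are weakly equal. Furthermore, consecutive segments in a mapping path either share a $B(X)$-endpoint (giving equality of $[id,id]$-images, condition~(\ref{cyc2}) of Definition~\ref{def:domain-cycle}) or share a $D(X)$-endpoint (giving equality of $[f,g]$-images, condition~(\ref{cyc3})).

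\emph{Forward direction.} Let $[x_0,\ldots,x_{2k+1}]$ be a proper domain cycle; write $x_j=(b_j,\epsilon_j)$ and set $\delta_j:=f$ if $\epsilon_j=0$, else $\delta_j:=g$. By~(\ref{cyc2}) the pairs $(x_{2i},x_{2i+1})$ share a common $B(X)$-value $\beta_i$; by~(\ref{cyc3}), together with the wrap $x_{2k+2}:=x_0$, the pairs $(x_{2i+1},x_{2i+2})$ share a common $D(X)$-value $z_i$. I cut the cycle at $\beta_0$ into two arcs that both connect $z_0$ to $z_k$: Path A is $[(z_0,\delta_1^{op},\beta_0),(\beta_0,\delta_0,z_k)]$, while Path B traverses the remaining $\beta_i$ via $[(z_0,\delta_2^{op},\beta_1),(\beta_1,\delta_3,z_1),\ldots,(\beta_k,\delta_{2k+1},z_k)]$ (degenerating to the empty path when $k=0$). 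Condition~(\ref{cyc1}) forces $\delta_{2i}\neq\delta_{2i+1}$, so no two consecutive segments within either arc are weakly equal; the global properness $x_i\neq x_j$ for $i\neq j$ forces non-weak-equality of any pair drawn from Path A and Path B, as well as of any pair of non-consecutive segments within the same arc.

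\emph{Reverse direction.} Let $P_1,P_2$ be disjoint proper mapping paths connecting $z,z'\in D(X)$. Both have even length---say $2m$ and $2n$ with $m+n\geq 1$---since they begin and end in $D(X)$. Concatenate $P_1$ with the reversal of $P_2$ to form a closed loop $L$ of $2(m+n)$ segments, and read off the sequence $[x_0,\ldots,x_{2(m+n)-1}]$ of associated $B+B$-elements. Conditions~(\ref{cyc2}) and~(\ref{cyc3}) are immediate from the shared-endpoint structure of $L$, with the wrap-around $h_2(x_{2(m+n)-1})=h_2(x_0)$ witnessed by the common $D$-endpoint $z$. An equality $x_i=x_j$ for $i\neq j$ would translate to weak equality of the corresponding two segments of $L$: within a single half this contradicts properness of $P_1$ or $P_2$, while across the halves it contradicts disjointness (after observing that reversing a segment preserves its weak-equivalence class). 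Hence the sequence is a proper domain cycle.

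The main difficulty is the reverse direction, where establishing properness of the constructed domain cycle requires a careful case analysis at the two gluing boundaries of $L$ (at $z$ and $z'$). Here, precisely because reversal preserves weak equivalence, the disjointness hypothesis (not merely properness of each $P_k$) is what rules out coincidences $x_i=x_j$ spanning the two halves. Once the segment-to-$B+B$ correspondence is set up precisely, the remainder of the argument reduces to routine bookkeeping over the four possible orientations $f,g,f^{op},g^{op}$ of each segment.
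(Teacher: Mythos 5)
The paper itself gives no proof of Lemma~\ref{lemma:dom-cyc-and-mapp-paths} (it is explicitly omitted, with a pointer to Lemma~14 of the technical report \cite{kw17}), so there is no in-paper argument to compare against; your reconstruction is the natural one and is correct. The key device --- identifying weak-equivalence classes of path segments of sort $X$ with elements of $(B+B)(X)$, so that $[id,id]$ reads off the shared $B$-endpoint (condition~(\ref{cyc2})) and $[f,g]$ the shared $D$-endpoint (condition~(\ref{cyc3})) --- is exactly what makes the equivalence work, and your cutting of a proper cycle into the two arcs at $\beta_0$, respectively the gluing of $P_1$ with the reversal of $P_2$ (using that both paths have even length and that reversal preserves weak equivalence), transfers properness and disjointness correctly in both directions, including the degenerate cases $k=0$ and $P_1=[\;]$.
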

Lemmas \ref{theo:VK-pushout}, \ref{lemma:coeq-pushout}, and \ref{lemma:dom-cyc-and-mapp-paths} yield
\begin{cor}[Condition for VK Coequalizers]\label{theo:VK-coeq}
Let $\G = \pre{\B}$, let $\DA$ be the schema graph $\Double{1}{d}{d'}{2}$, and $\ol{\FD}:\DA\to \G$. The coequalizer diagram
\[\DoubleWithCocone{\ol{\FD}_1}{\ol{\FD}_d}{\ol{\FD}_{d'}}{\ol{\FD}_2}{\kappa_2}{S}{\kappa_1}\]
has the Van Kampen property, if and only if for all $X\in \B$ and all $z,z'\in \ol{\FD}_2(X)$ : There are no two disjoint proper mapping paths of sort $X$ in $\ol{\FD}$ connecting $z$ and $z'$.\qed
\end{cor}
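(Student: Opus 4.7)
The plan is to chain the three preceding lemmas into a direct equivalence. Starting from the coequalizer
\[\DoubleWithCocone{\ol{\FD}_1}{\ol{\FD}_d}{\ol{\FD}_{d'}}{\ol{\FD}_2}{\kappa_2}{S}{\kappa_1},\]
I would first invoke Lemma~\ref{lemma:coeq-pushout}\ref{enum:coeq-pushout2} to pass from this coequalizer to the associated pushout of the span
\[\xymatrix{\ol{\FD}_2 & \ol{\FD}_1 + \ol{\FD}_1 \ar[l]_{[\ol{\FD}_d,\ol{\FD}_{d'}]} \ar[r]^(0.7){[id,id]} & \ol{\FD}_1,}\]
so that the coequalizer is Van Kampen iff this pushout is Van Kampen. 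The use of Lemma~\ref{lemma:coeq-pushout} here is legitimate since $\G = \pre{\B}$ is in particular a topos and hence (finitely) extensive, which is exactly the hypothesis used there.

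Next, I would apply Lemma~\ref{theo:VK-pushout} to translate the Van Kampen property of this pushout into the nonexistence of a proper domain cycle for the span $([\ol{\FD}_d,\ol{\FD}_{d'}], [id,id])$ in the sense of Definition~\ref{def:domain-cycle}. Finally, Lemma~\ref{lemma:dom-cyc-and-mapp-paths} rephrases the existence of a proper domain cycle of sort $X$ as the existence of two elements $z, z' \in \ol{\FD}_2(X)$ and two disjoint proper mapping paths of sort $X$ connecting them. Contraposing and combining the three equivalences yields exactly the statement of the corollary.

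The one point that requires a little care is to make sure that the mapping paths produced by Lemma~\ref{lemma:dom-cyc-and-mapp-paths} for the two-arrow span $(\ol{\FD}_d, \ol{\FD}_{d'}): \ol{\FD}_1 \rightrightarrows \ol{\FD}_2$ coincide, on the nose, with mapping paths in the diagram $\ol{\FD}: \DA \to \G$ in the sense of Definition~\ref{def:Weak-Mapping-Path}: path segments for the span correspond to triples $(y, d, y')$ or $(y, d', y')$ (and their formal opposites), which is exactly the edge set $\DA_1 \cup \DA_1^{op}$. So the translation is purely notational, and no further argument is needed. Since each of the three ingredient lemmas is already established, there is no genuine obstacle here; the role of this corollary is simply to bundle them into the form that will be lifted, in Sect.~\ref{sec:chap3-colimits}, to arbitrary colimiting cocones.
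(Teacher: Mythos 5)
Your proposal is correct and matches the paper's own proof, which is exactly the chaining of Lemma~\ref{lemma:coeq-pushout}, Lemma~\ref{theo:VK-pushout}, and Lemma~\ref{lemma:dom-cyc-and-mapp-paths} (the order of the two legs of the span is immaterial since a proper domain cycle for $(h_1,h_2)$ can be rotated into one for $(h_2,h_1)$). Your closing remark that the restriction to $z,z'\in\ol{\FD}_2(X)$ is already built into Lemma~\ref{lemma:dom-cyc-and-mapp-paths} is the only point needing care, and you handle it correctly.
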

Recall the already made observations in Example \ref{ex:simple-examples}, \ref{enum:ex1_1}.\ and \ref{enum:ex1_4}., which confirm this statement.

\subsection{From Coequalizers to Colimits}\label{sec:chap3-colimits}
It is well-known \cite{MacLane-1998}, that the colimit of $\FD:\I\to \G$ can be computed by constructing the coequalizer of
\begin{equation}\label{eqn:colim-as-coeq1}
\Double
  {\coprod_{d\in\I_1}\FD_{s(d)}}
  {\vec{\FD_d}}
  {\vec{id}}
  {\coprod_{j\in\I_0}\FD_j,}
\end{equation}
where $\vec{\FD_d}$ and
  $\vec{id}$ are mediators out of the involved coproducts:
  \begin{equation}\label{eqn:mediators}\xymatrix{
  \coprod_{d\in\I_1}\FD_{s(d)} \ar[rr]^{\vec{\FD_d}} 			&&
  \coprod_{j\in\I_0}\FD_j &&
  \coprod_{d\in\I_1}\FD_{s(d)} \ar[rr]^{\vec{id}}		&&
  \coprod_{i\in\I_0}\FD_i
  \\
  \FD_i\ar[u]^{\subseteq_{i,d}}\ar[rr]^{\FD_d}							&& \FD_j\ar[u]_{\subseteq_j} &&
  \FD_i\ar[u]^{\subseteq_{i,d}}\ar@{=}[rr]								&& \FD_i\ar[u]_{\subseteq_i}
  }
  \end{equation}
  (for all edges $\Mapping{i}{d}{j}$ in $\I_1$).\footnote{Note that in the left coproduct of \myref{eqn:colim-as-coeq1} an object $\FD_i$ occurs as often as there are edges $d$ leaving $i$ in $\I$. Moreover, $\subseteq_{i,d}$ in \myref{eqn:mediators} denotes the embedding of $\FD_i$ into its appropriate copy, namely the source of $\FD_d$.}
\par
Let $\ol{\FD}: \DA\to \G$ be the functor mapping $\DA$ to the objects and arrows in  \myref{eqn:colim-as-coeq1}, where schema graph $\DA$ is given as before (cf.\ e.g. Cor.~\ref{theo:VK-coeq}). Then a technical analysis shows that we can combine mapping paths of $\FD$ with special mapping paths of $\ol{\FD}$ (again, we omit the proof and refer to \cite{kw17}):
\begin{lem}\label{lemma:path-osc}
Let $\G = Set^\B$ and $X\in \B$. The following statements are equivalent:
\begin{itemize}
  \item $\forall i,j\in\I_0 :\forall z\in\FD_i(X), \forall z'\in\FD_j(X)$: There are no two disjoint proper mapping paths in $\FD$ connecting $z$ and $z'$.
  \item $\forall z,z'\in\ol{\FD}_2(X) = \coprod_{j\in\I_0}\FD_j(X)$: There are no two disjoint proper mapping paths in $\ol{\FD}$ connecting $z$ and $z'$.\qed
  \end{itemize}
\end{lem}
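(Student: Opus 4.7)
The plan is to establish a natural bijection $\Phi$ between proper mapping paths in $\FD$ connecting elements $z \in \FD_i(X)$, $z' \in \FD_j(X)$ and proper mapping paths in $\ol{\FD}$ connecting the same pair, viewed as elements of $\ol{\FD}_2(X) = \coprod_{k \in \I_0} \FD_k(X)$, and to verify that $\Phi$ preserves disjointness. Under the tacit disjointness assumption of Sect.~\ref{sect:prelim}, ranging over $i \in \I_0$ and $z \in \FD_i(X)$ is the same as ranging over $z \in \ol{\FD}_2(X)$, so the quantifiers in the two statements of the lemma match up.

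\medskip

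First I would define $\Phi$ by expanding each $\FD$-segment into a length-two $\ol{\FD}$-path through $\ol{\FD}_1$. For each edge $e : i \to j$ in $\I_1$ and each $y \in \FD_i(X)$, write $(y, e)$ for the corresponding element of $\ol{\FD}_1(X) = \coprod_{e' \in \I_1} \FD_{s(e')}(X)$, so that $\vec{id}((y,e)) = y$ and $\vec{\FD_e}((y,e)) = \FD_e(y)$. Set
\[
\Phi\bigl((y, e, \FD_e(y))\bigr) := \bigl[(y, (d')^{op}, (y, e)),\; ((y, e), d, \FD_e(y))\bigr],
\]
\[
\Phi\bigl((\FD_e(y), e^{op}, y)\bigr) := \bigl[(\FD_e(y), d^{op}, (y, e)),\; ((y, e), d', y)\bigr],
\]
and extend segment-wise, sending the empty path to the empty path.

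\medskip

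Next I would show that $\Phi$ restricts to a bijection on proper paths. Since every $\DA$-arrow runs from $1$ to $2$, any $\ol{\FD}$-path connecting two elements of $\ol{\FD}_2$ strictly alternates between $\ol{\FD}_2$ and $\ol{\FD}_1$, hence has even length and decomposes into consecutive pairs sharing a common intermediate in $\ol{\FD}_1$. A priori such a pair could employ any combination of $d, d', d^{op}, (d')^{op}$; however, the two pathological combinations $\bigl[(\FD_e(y), d^{op}, (y, e)), ((y, e), d, \FD_e(y))\bigr]$ and $\bigl[(y, (d')^{op}, (y, e)), ((y, e), d', y)\bigr]$ consist of two $\ol{\FD}$-segments that are weakly equal to each other, violating properness of the $\ol{\FD}$-path. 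Thus in a proper $\ol{\FD}$-path every pair has one of the two shapes appearing in the definition of $\Phi$, providing the inverse.

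\medskip

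The heart of the proof is the following pointwise correspondence: two $\FD$-segments $s_1, s_2$ are weakly equal if and only if $\Phi(s_1)$ and $\Phi(s_2)$ share a weakly equal pair of $\ol{\FD}$-segments. I would establish this by a finite case analysis on the forward/reverse orientations of $s_1, s_2$ and, for each orientation pattern, on the four possible cross-matches between the two segments of $\Phi(s_1)$ and the two of $\Phi(s_2)$. Because $d \ne d'$ in $\DA$, weak equality of middle components is very rigid, and every successful match forces the underlying $\I$-edges and elements in $s_1, s_2$ to coincide, giving $s_1 =_w s_2$. It follows at once that a $\FD$-path is proper iff its image is proper, and that two $\FD$-paths are disjoint iff their images are disjoint. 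Combining the bijection with these preservation statements yields the equivalence claimed in the lemma.

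\medskip

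The main obstacle is the case analysis in the previous paragraph: although each individual comparison is routine, the careful tracking of orientations and middle components across the $2 \times 2 \times 4$ configurations is tedious, and relies crucially on the rigidity imposed by $d \ne d'$ and on the fact that each element of $\ol{\FD}_1$ carries its originating $\I$-edge as part of its data, forcing any segment match to identify the underlying $\FD$-segments.
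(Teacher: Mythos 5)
Your construction is correct: the segment-doubling map $\Phi$, the observation that a proper $\ol{\FD}$-path between elements of $\ol{\FD}_2$ must alternate through $\ol{\FD}_1$ with every consecutive pair in one of the two non-degenerate shapes, and the weak-equality correspondence together do yield the stated equivalence. This is essentially the ``technical analysis combining mapping paths of $\FD$ with special mapping paths of $\ol{\FD}$'' that the paper alludes to (the proof itself being deferred to the technical report \cite{kw17}), so your route matches the intended one.
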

The main part of the proof of Theorem \ref{theo:main} is to carry over the VK property for the coequalizer of  \myref{eqn:colim-as-coeq1} to its underlying colimiting diagram for $\FD$.
\begin{lem}\label{lemma:VK_colim_coeq}
For $\G:= Set^\B$, the cocone \myref{eqn:cocone} is VK if and only if the cocone
\begin{equation}\label{eqn:coeq}
\DoubleWithCocone{
\coprod_{d\in\I_1}\FD_{s(d)}}
  {\vec{\FD_d}}
  {\vec{id}}
  {\coprod_{j\in\I_0}\FD_j}
  {\ol{\kappa}}{S}{\ol{\kappa}'}
\end{equation}
resulting from constructing the coequalizer in \myref{eqn:colim-as-coeq1} is VK.
\end{lem}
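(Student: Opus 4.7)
The plan is to exhibit an equivalence of categories $\Phi\colon\G^\I\Downarrow\FD \to \G^\DA\Downarrow\ol{\FD}$ satisfying $\Phi\circ\kappa^\ast\cong\ol{\kappa}^\ast$. Since both $\kappa^\ast$ and $\ol{\kappa}^\ast$ share the source $\GO{S}$, such an equivalence yields immediately that $\kappa^\ast$ is an equivalence if and only if $\ol{\kappa}^\ast$ is, which is exactly the claim of the lemma.

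Define $\Phi$ on a cartesian $\tau\colon\FE\Rightarrow\FD$ to be the parallel pair $\coprod_{d\in\I_1}\FE_{s(d)}\rightrightarrows\coprod_{j\in\I_0}\FE_j$ built from $\vec{\FE_d}$ and $\vec{id}$ in analogy with \myref{eqn:colim-as-coeq1}, equipped with the transformation to $\ol{\FD}$ whose two components are $\coprod_d\tau_{s(d)}$ and $\coprod_j\tau_j$. Cartesianness of $\Phi(\tau)$ along both $\vec{id}$ and $\vec{\FD_d}$ follows from the stability of coproducts under pullback \myref{eqn0.7} applied pointwise to the pullback squares of $\tau$. Moreover, $\ol{\kappa}$ is the mediator $\vec{\kappa}$ and $\ol{\kappa}'$ is the mediator induced by the family $(\kappa_{s(d)})_d$, so another application of \myref{eqn0.7} provides canonical isomorphisms $\ol{\kappa}^\ast(\sigma)_2\cong\coprod_j\kappa_j^\ast(\sigma)$ and $\ol{\kappa}^\ast(\sigma)_1\cong\coprod_d\kappa_{s(d)}^\ast(\sigma)$, establishing $\Phi\circ\kappa^\ast\cong\ol{\kappa}^\ast$.

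A quasi-inverse $\Psi$ is built as follows. Given a cartesian $\ol{\tau}\colon\ol{\FE}\Rightarrow\ol{\FD}$, extensivity \myref{eqn0.5} applied over $\coprod_j\FD_j$ decomposes $\ol{\FE}_2$ as $\coprod_{j}\FE_j$ with components $\tau_j\colon\FE_j\to\FD_j$ obtained as chosen pullbacks of $\ol{\tau}_2$ along the injections $\subseteq_j$. From $\vec{id}\circ\subseteq_{i,d}=\subseteq_i$ and cartesianness of $\ol{\tau}$ along $\vec{id}$, pullback decomposition identifies the component of $\ol{\FE}_1$ sitting over the $d$-th copy of $\FD_i$ (with $i=s(d)$) canonically with $\FE_{s(d)}$. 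Combining this identification with the cartesian square of $\ol{\tau}$ along $\vec{\FD_d}$ yields arrows $\FE_d\colon\FE_{s(d)}\to\FE_{t(d)}$, and a further application of pullback decomposition shows that every naturality square of the reconstructed $\tau\colon\FE\Rightarrow\FD$ is a pullback, so $\tau$ is cartesian. Universality of pullbacks and coproducts then makes $\Psi\circ\Phi$ and $\Phi\circ\Psi$ naturally isomorphic to the identity.

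The main obstacle is the combinatorial bookkeeping in constructing $\Psi$: the indexing set of the left coproduct in \myref{eqn:colim-as-coeq1} runs over \emph{edges}, so a single $\FD_i$ may appear in many copies (one per edge leaving $i$ in $\I$), and one must carefully track which chosen pullback along which $\subseteq_{i,d}$ is being used and then thread all canonical isomorphisms through repeated pullback composition and decomposition. These verifications are conceptually routine but technically intricate, and the full bookkeeping is deferred to \cite{kw17}.
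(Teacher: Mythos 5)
Your proposal is correct and follows essentially the same route as the paper: both construct the functor $\tau\mapsto(\coprod_{d\in\I_1}\tau_{s(d)},\coprod_{i\in\I_0}\tau_i)$, show via extensivity and pullback-stability of coproducts that it is an equivalence $\G^{\I}\Downarrow\FD\simeq\G^{\DA}\Downarrow\ol{\FD}$, and conclude by the commutativity (up to natural isomorphism) of the triangle over $\GO{S}$ formed with $\kappa^\ast$ and $\ol{\kappa}^\ast$. Your explicit sketch of the quasi-inverse via extensivity and pullback decomposition fills in exactly the ``elementary arguments'' the paper defers to the technical report.
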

{\em Proof}: Let $\kappa^\ast: \GO{S}\to \G^\I\Downarrow \FD$ be the functor introduced in \myref{equ:pulling-back-functor} and $\ol{\kappa}^\ast: \GO{S}\to \G^{\DA}\Downarrow \ol{\FD}$
be the corresponding functor for the colimiting cocone in \myref{eqn:coeq}. Using \myref{eqn0.5} and \myref{eqn0.7}, one can show that for each cartesian $\tau:\FE\Rightarrow\FD$ the squares
\[
\xymatrix{
\coprod_{d\in\I_1}\FE_{s(d)} \ar[rr]^{\vec{id}}
\ar[d]_{\coprod_{d\in\I_1}\tau_{s(d)}} && \coprod_{i\in\I_0}\FE_i\ar[d]^{\coprod_{i\in\I_0}\tau_i}\\
\coprod_{d\in\I_1}\FD_{s(d)} \ar[rr]^{\vec{id}} &&
\coprod_{i\in\I_0}\FD_i
}\quad\quad
\xymatrix{
\coprod_{d\in\I_1}\FE_{s(d)} \ar[rr]^{\vec{\FE_d}}
\ar[d]_{\coprod_{d\in\I_1}\tau_{s(d)}} && \coprod_{j\in\I_0}\FE_j\ar[d]^{\coprod_{j\in\I_0}\tau_j}\\
\coprod_{d\in\I_1}\FD_{s(d)} \ar[rr]^{\vec{\FD_d}} &&
\coprod_{j\in\I_0}\FD_j
}
\]
are pullbacks, i.e., there is the assignment $\tau\mapsto (\coprod_{d\in\I_1}\tau_{s(d)}, \coprod_{i\in\I_0}\tau_{i})$. It can be shown with elementary arguments that it extends to an equivalence of categories:
\[\phi:\G^{\I}\Downarrow{\FD}\cong\G^{\DA}\Downarrow\ol{\FD}.\]
Moreover, the colimit construction principle, see \myref{eqn:colim-as-coeq1}, yields commutativity of
\[
\xymatrix{
&\GO{S}\ar[dl]_{\kappa^\ast} \ar[dr]^{\ol{\kappa}^\ast} &\\
\G^\I\Downarrow \FD \ar_{\phi}[rr]&& \G^{\DA}\Downarrow \ol{\FD}
}
\]
up to natural isomorphism, hence, by Def.~\ref{def:VKCocone}, the desired result. \qed
\subsection{Combining the Results}
We are now ready to prove Theorem \ref{theo:main} for disjoint proper mapping paths. This follows by combining Lemma \ref{lemma:VK_colim_coeq}, Corollary \ref{theo:VK-coeq} and Lemma \ref{lemma:path-osc}. Afterwards we can get rid of disjointness by showing that any two proper mapping paths connecting the same two elements also admit two disjoint proper paths (probably connecting two different elements). \qed


\section{Counterexamples in other Categories}\label{sec:counterex}
The valuable implication in the equivalence statement of Theorem \ref{theo:main} is ''Path-Uniqueness implies the Van Kampen Property''. In this section, we show that this implication immediately breaks, if we leave the universe of presheaf topoi, i.e.\ there are simple counter-examples in categories that are simililar to, but don't meet all axioms of presheaf topoi. We separate the examples depending on whether unit $\eta$ or counit $\varepsilon$ of adjunction $\kappa_\ast\dashv \kappa^\ast$ fails to be isomorphic (cf.\ Def.\ref{def:VKCocone} and ensuing remarks) although path uniqueness holds. 
\par
For the sake of completeness we finally give an example for a violation of the reverse direction of the equivalence in Theorem \ref{theo:main}. It illustrates a situation of a Van Kampen cocone, although path uniqueness does not hold. 

\subsection{Path Uniqueness, but no Isomorphic Unit}
Consider the category $HSet$, whose objects are sets, where in each set at most one element may be highlighted (we also say ''marked''). We denote such a set with $(x,A)$ (indicating that $x\in A$ is marked). If no element is highlighted, we write $(\bot,A)$. Formally these sets are {\em partial} algebras \cite{EGW1998,Rei87,Wol90,WKWC94} w.r.t.\ the algebraic signature $\Sigma$ with one sort and one constant symbol $h$ (''$h$ighlighted''), such that in any $\Sigma$-algebra with carrier set $A$ there is a constant, which can be understood as a partial map $h:\{\ast\}\to A$, i.e.\ there may or may not be a marked element depending on whether $h$ is defined for $\ast$ or not. A morphism between $(x,A)$ and $(y,B)$ is a total mapping $f:A\to B$ for which the additional property 
\[x\not=\bot\Rightarrow (y\not=\bot \mbox{ and }f(x) = y)\] 
holds. It can be shown \cite{EGW1998} that $HSet$ possesses all limits and colimits.   
\par 
In Fig.\ref{fig:example3}, the bottom face is a pushout (note that the one element set $\{\ast\}$ arises from the fact that there can not be more than one marked element). The behaviour of each mapping should be obvious. Clearly, the back faces are pullbacks. Although the top face is a pushout, the front faces fail to be pullbacks. Path uniqueness for $(h_1, h_2)$ is satisfied trivially.   
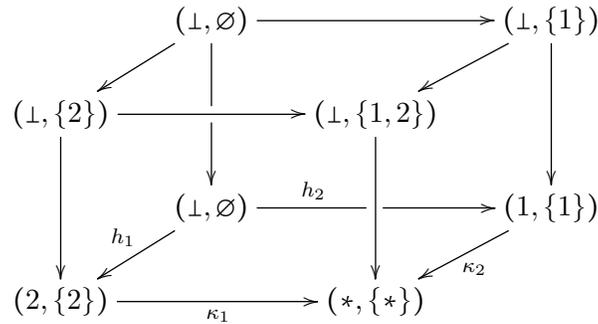
\begin{figure}[!htb]
\[
\xymatrix{
& {(\bot,\emptyset)}\ar[dd]|{\hole}\ar[ld]\ar[rr]		& {}		& (\bot,\{1\})\ar[dd]\ar[dl]	\\
(\bot,\{2\})\ar[dd]\ar[rr]	&{} & (\bot,\{1,2\}) \ar[dd]& {} 	\\
& (\bot,\emptyset)\ar[rr]^(0.3){h_2}|(0.48){\hole}\ar[ld]_{h_1} 	& {} 		& (1,\{1\}) \ar[ld]^{\kappa_2}	\\
(2,\{2\})\ar[rr]_{\kappa_1}	& 	
& (\ast,\{\ast\}) 		& {} 	
}   
\]
\caption{Path Uniqueness, but not VK, I}\label{fig:example3}
\end{figure}

\subsection{Path Uniqueness but no Isomorphic Co-Unit}
By the remarks in Sect.\ref{sec:main}, this can only be the case, if the pullback functor possesses no right-adjoint. For this, we consider the category $\C = CAT$ of all small categories together with functors between them. The following example in $\C$ simultaneously shows (1) a non-VK-pushout with path uniqueness and (2) that the pullback functor has no right-adjoint.
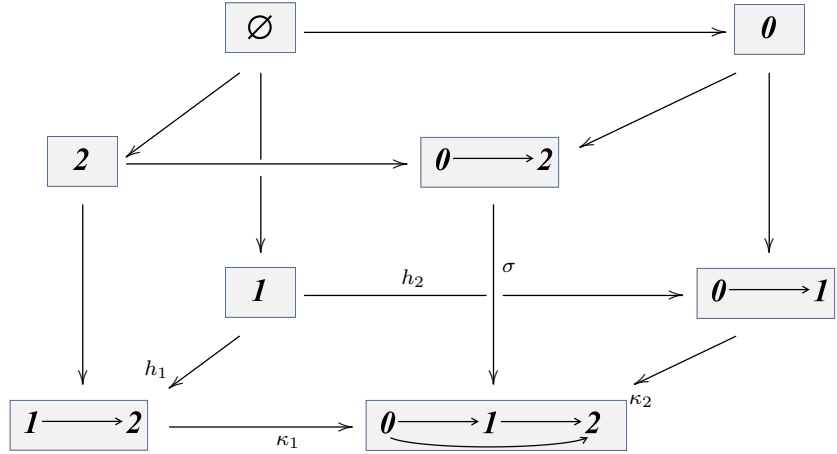
\begin{figure}[!htb]
\[ 
\xymatrix{ 
& {\ingr{0.35}{E.png}}\ar[dd]|(0.42){\hole}\ar[ld]\ar@<2ex>[rr]		& {}		& {\ingr{0.35}{0.png}}\ar[dd]\ar[dl]	\\
{\ingr{0.35}{2.png}}\ar[dd]\ar@<2ex>[rr]	&{} & {\ingr{0.35}{02.png}} \ar[dd]^(0.3)\sigma& {} 	\\
& {\ingr{0.35}{1.png}}\ar@<2ex>[rr]^(0.3){h_2}|(0.46){\hole}\ar[ld]_{h_1} 	& {} 		& {\ingr{0.35}{01.png}} \ar[ld]^{\kappa_2}	\\
\ingr{0.35}{12.png}\ar@<2ex>[rr]_{\kappa_1}	& 	
& {\ingr{0.35}{012.png}} 		& {} 	
}   
\]
\caption{Path Uniqueness, but not VK, II}\label{fig:example2}
\end{figure}
In Fig.\ref{fig:example2} each shaded rectangle shows a category, e.g.\ in the bottom there is a category with exactly one element $1$, two categories with one non-identity arrow between objects $0$ and $1$, $1$ and $2$, resp., and a category with three elements $0$,$1$, and $2$ where the arrow from $0$ to $2$ is the composition of the other two. In all cases, identity arrows are not shown. Arrows between the shaded rectangles are functors that map according to the numbering. This example has already been discussed in \cite{Sobo2004}.
\par
Obviously, the bottom square is a pushout in $\C$ and the back faces are pullbacks ($\emptyset$ denotes the empty category). Moreover, the front faces are pullbacks, but the top face fails to be a pushout (the pushout should be the category consisting of two objects $0$ and $2$ and no non-identity arrow), hence $\varepsilon$ is not an isomorphism. The pullback functor $\sigma^\ast$ maps the bottom square to the top square. If it would possess a right-adjoint, it would preserve the bottom colimit, which is not the case.
\par
Clearly, path uniqueness holds in span $(h_1,h_2)$, because both functors are injective on objects and arrows, resp. Note that we treat $\C$ as a many-sorted algebra with carrier sets $O$(bjects) and $(Hom(x,y))_{x,y\in O}$ (Hom-Sets) and (in contrast to graphs) with a family of binary operations $(\circ_{x,y,z}: Hom(y,z)\times Hom(x,y)\to Hom(x,z))_{x,y,z\in O}$, a family of constants $(id_x\in Hom(x,x))_{x\in O}$, and the appropriate monoidal axioms for neutrality (of identities) and associativity (of operations $\circ_{\_,\_,\_}$).

\subsection{Van Kampen holds, but Path Uniqueness is Violated}
This requires an artificial and radical narrowing of the size of the underlying category. We consider a category $\C$ which has sets $X$ with two constants $0$ and $1$ (actually an algebra for a signature with one sort and two constant symbols) subject to the following axiom
\[\forall x\in X: x = 0\vee x = 1.\]
Let sets $X$ and $Y$ with constants $0^X, 1^X$ and $0^Y, 1^Y$ be given, then an arrow from $X$ to $Y$ is a mapping $f:X\to Y$, which preserves constants, i.e.\ $f(0^X) = 0^Y$ and $f(1^X) = 1^Y$. It is not forbidden that the two constants coincide, such that the above equation reduces this category to two sets $2:=\{0,1\}$ and $1:=\{01\}$ (in set $1$, the constants coincide). It can further be observed that the only non-identity arrow is $2\to 1$. Furthermore, the only pullback of a co-span with two non-identity arrows is 
\[\xymatrix{
2\ar@{=}[r]\ar@{=}[d] & 2\ar[d] \\
2\ar[r] & 1 \\
}
\]
It can even be shown that this (very small) category has all limits and colimits (e.g.\ the initial object is $2$, terminal object is $1$).
\par 
We can now pick up the coequalizer example \myref{eqn:double-id-coeq} of Sect.\ref{sec:VKColimits}. There is the coequalizer diagram $\Double{1}{}{}{1}\Mapping{}{}{(1=:S)}$ where all arrows are identities. As pointed out in Example \ref{ex:simple-examples}, \ref{enum:ex1_1} path uniqueness is violated. 
\par
In each pullback pair 
\[\xymatrix{
A\ar[r]\ar@<1.ex>[r]\ar[d] & B\ar[d] \\
1\ar[r]\ar@<1.ex>[r] & 1 \\
}
\]
we must have $A = B$ and the two top mappings must both be identities. Note that the non-identical bijection of $\{0,1\}$ as in \myref{eqn:double-id-coeq} can no longer occur. Thus, only two such pullback pairs exist. There are also exactly two objects in $\CO{S}$, namely $id_1$ and $2\to 1$, and it is easy to see that they correspond to each other via $\kappa^\ast$ and $\kappa_\ast$. Hence the coequalizer has the Van Kampen property, although path uniqueness is violated.


\section{Practical Guidelines}\label{sec:practice}
A main use case of the previous results can be found in software engineering and especially in model-driven software designs: Components of diagrams are models (e.g. data models or metamodels which govern the admissible structure of models), morphisms are relations between the models. As described in the introduction, model assembly is often important (cf. Sect.\ref{sec:appl}). It shall not only be carried out on a syntactical level (models), but in the same way on the semantical level (instances) such that assembled instances can correctly be decomposed into their original instances by pullback. 
\par 
It is a goal to efficiently verify whether {\em compositionality} holds, i.e.\ whether the Van Kampen property is satisfied. Since model composition always requires computation of colimits, VK-verification at the same time is desirable. In this section we will describe (1) an efficient colimit computation and (2) how to simultaneously check the VK-property. 
\par
To further reduce verification effort, we will first look for criteria to decide, for a given diagram, if VK holds or not, without checking explicitly the path conditions of Theorem \ref{theo:main}. Moreover, we will show that -- in cases where the path condition is needed -- one does not need to check the condition for {\em all} $i,j\in\I_0$ but only for a smaller subset of indices. All investigations lead to a decision diagram, which guides a modeler who has to decide whether a given diagram has the VK property or not. This diagram is given in the end of this section, see Fig,~\ref{fig:decision-diagram}.
\par 
All additional constructions are  elementary and will not be elaborated in detail. Instead we refer the reader to the corresponding technical report \cite{kw17}.
\subsection{Relevant Types of Mapping Paths}
We will discuss now what kinds of paths and what pairs of paths we really need to check in practice. The attentive reader may have noticed already that properness excludes cycles w.r.t.\ path segments but does not exclude cycles w.r.t.\ elements.
Let $P=[(y_0, \delta_0, y_1), (y_1, \delta_1, y_2), \ldots, (y_{n-1},\delta_{n-1}, y_n)]$ be a mapping path in a diagram $\FD:\I\to\pre{\B}$ with finite $\I$. By $\iota_i$, we denote the unique vertex in $\I_0$ with $y_i\in\FD_{\iota_i}$.

\begin{defi}\label{def:inner-cycle-free}
A mapping path is called {\em inner-cycle free}, if for all indices $0\leq i < j \leq n$ with $j-i\leq n-1$:  $y_i\not=y_j$. 
\end{defi}
Empty mapping paths or paths of length 1 are inner-cycle free by definition. Note, that we allow $P$ to be an "outer" cycle, i.e., $y_0=y_n$. An elementary construction \cite{kw17} shows that each path can be reduced to an inner-cycle free non-empty (sub-)path. The reduction preserves properness and disjointness of mapping paths. Moreover, one can easily see that two different proper paths from $z$ to $z'$ even yield two {\em disjoint} proper paths (probably between other elements). Thus we obtain the following corollary of Theorem \ref{theo:main}: 
\begin{cor}\label{cor:main}
Let $\G = \pre{\B}$ be a presheaf topos and $\FD:\I\to\G$ be a diagram with $\I$ a finite directed multigraph. Let
\[\FD\stackrel{\kappa}{\Rightarrow} \Delta S\]
be a colimiting cocone. The following are equivalent: 
\[\begin{array}{cll}
(1) & \mbox{The cocone is a Van Kampen cocone}\\
(2) & \forall X\in \B, i,j\in\I_0, z\in\FD_i(X), z'\in\FD_j(X):&
\mbox{There are no two different proper paths} \\
 & & \mbox{from $z$ to $z'$} \hfill\\
(3) & \forall X\in \B, i,j\in\I_0, z\in\FD_i(X), z'\in\FD_j(X):&
\mbox{There are no two disjoint proper paths} \\
 & & \mbox{from $z$ to $z'$} \hfill\\
(4) & \forall X\in \B, i,j\in\I_0, z\in\FD_i(X), z'\in\FD_j(X):&
\mbox{There are no two disjoint inner-cycle free} \\
 & & \mbox{proper paths from $z$ to $z'$} \hfill   
\end{array}
\]
\end{cor}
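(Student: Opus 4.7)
The equivalence $(1)\Leftrightarrow(2)$ is exactly the content of Theorem~\ref{theo:main}, so no new argument is required for it. My plan is to close the loop $(2)\Rightarrow(3)\Rightarrow(4)\Rightarrow(2)$ and thereby reduce the entire corollary to Theorem~\ref{theo:main}. The first two implications are immediate from the definitions, taken contrapositively: two disjoint proper paths are in particular two different proper paths, because disjointness forbids them from sharing even one segment and hence from coinciding as sequences; and two disjoint inner-cycle free proper paths are, of course, two disjoint proper paths.

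The genuinely new step is $(4)\Rightarrow(2)$, which I would prove contrapositively in two substeps. Start from two different proper paths connecting some $z$ and $z'$ in $\FD$. First, invoke the ``disjointing lemma'' that is already established as the very last step of the proof of Theorem~\ref{theo:main}: any two different proper paths connecting the same pair of elements admit a pair of disjoint proper paths, possibly now connecting different elements $w, w'$. Second, iteratively excise inner cycles in each of these two paths $P_\ell=[(y_0,\delta_0,y_1),\ldots,(y_{n-1},\delta_{n-1},y_n)]$: whenever there are indices $0\le i<j\le n$ with $j-i\le n-1$ and $y_i=y_j$, delete the segment block $(y_i,\delta_i,y_{i+1}),\ldots,(y_{j-1},\delta_{j-1},y_j)$. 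The shortened sequence is a legal mapping path, since the new junction $y_{i-1}\to y_i=y_j\to y_{j+1}$ is a valid concatenation (the equality $y_i=y_j$ forces both elements into the same $\FD_k$); the endpoints $y_0$ and $y_n$, and hence $w, w'$, are preserved by construction; properness survives because no new pair of weakly equal segments can appear when segments are only removed; and disjointness from the other path survives for the same reason. Each excision strictly decreases the (finite) length of $P_\ell$, so after finitely many steps both paths are inner-cycle free. This yields two disjoint inner-cycle free proper paths from $w$ to $w'$, contradicting (4).

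The main obstacle I foresee is the bookkeeping around the excision step: one must check carefully that the new junction is indeed a valid segment concatenation, and that both properness and (cross-path) disjointness really pass to the shortened path. Everything else either cites Theorem~\ref{theo:main}, uses the disjointing lemma already produced in its proof, or is a one-line observation from the definitions of ``proper'', ``disjoint'' and ``inner-cycle free''.
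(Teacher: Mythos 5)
Your proposal is correct and follows essentially the same route as the paper: the equivalence $(1)\Leftrightarrow(2)$ is delegated to Theorem~\ref{theo:main}, the ``different implies disjoint (possibly between other elements)'' lemma from the end of that theorem's proof is reused, and the remaining step is the inner-cycle excision, which the paper asserts (deferring details to the technical report) preserves properness and disjointness exactly as you argue. Your explicit verification of the excision step---valid junction via $y_i=y_j$, preservation of endpoints, properness and cross-path disjointness under segment removal, and termination by length decrease---is precisely the ``elementary construction'' the paper cites.
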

In addition to a better variety of VK characterisations, this result also provides a high degree of freedom for the implementation of algorithms: The result does not depend on whether an algorithm generates only disjoint pairs of paths, or also builds paths with inner cycles. However, every algorithm based on Corollary \ref{cor:main} still has to traverse all components $(\FD_i)_{i\in\I_0}$. The next sections will explain why the traversal of components can significantly be reduced.  

\subsection{Cyclic Shape Graphs}\label{sec:cyclic}
A {\em directed cycle} in $\I$ is a set of pairwise distinct edges $d_0, \ldots, d_{n-1}$ in $\I_1$ for some $n\ge 1$ with $t(d_{i-1}) = s(d_{i\,mod\,n})$ ($i\in\{1, \ldots, n\}$). If $n=1$, the cycle is also called a loop (cf. Example \ref{ex:simple-examples}, \ref{enum:ex1_3}). Let $ends(d) = \{s(d), t(d)\}$ be the set of endpoints of an edge $d$, then an {\em undirected cycle} in $\I$ is a set of pairwise distinct edges $d_0, \ldots, d_{n-1}$ in $\I_1$ for some $n\ge 2$ with $ends(d_{i-1})\cap ends(d_{i\,mod\,n}) \not =\emptyset$ ($i\in\{1, \ldots, n\}$). An example for an undirected cycle is the shape graph $\DA$ and also the shape graph in the example of Sect.\ref{sec:appl}.  
\par 
An important observation is that VK is violated, if $\I$ posseses a directed cycle  
\[p=(i_0\stackrel{d_0}{\to}i_1\stackrel{d_1}{\to}\ldots\stackrel{d_{n-1}}{\to}i_n = i_0)\]
and if for some $X\in \B$ and some $i\in\{i_0,\ldots, i_n\}$ the component $\FD_i(X)$ is a finite set. To see this, let w.l.o.g.\ $i= i_0$ and 
\[\FD_p:=\FD_{d_{n-1}}\circ\ldots\circ\FD_{d_1}\circ\FD_{d_0}:\FD_{i_0}(X)\to \FD_{i_n}(X)  = \FD_{i_0}(X).\]
be the corresponding composed function. Obviously, there exist $y\in\FD_{i_0}(X)$ and $1\leq k\leq |\FD_{i_0}(X)|$ such that $\FD_p^k(y)=y$, where the mappings can be chosen such that this results in a non-empty proper mapping path connecting $y$ with itself: Because the empty path also connects $y$ with itself (cf. Def.\ref{def:Weak-Mapping-Path}), VK is violated by Cor.\ref{cor:main}.


\subsection{Specialized Construction of Colimits}\label{sec:colim-simple}
In this section, we prepare a general and more efficient algorithm for VK verification, which runs in the background of a colimit computation. For this we need to distinguish between different characteristics of shape graph $\I$ and derive from that a specialized colimit construction. 
\par
The universal construction recipe for colimits \myref{eqn:colim-as-coeq1} shows how to construct the colimit of any diagram in a uniform way by means of a coequalizer. It allows to prove general results about colimits (as we have demonstrated in the previous sections). Especially, in case of graphs $\I$ with infinite descending chains and/or with directed cycles, this universal recipe is the best we have. E.g. colimit construction of 
\[
\xymatrix{
\FD_1\ar@/^/[r]^{\FD_d}
&  \FD_2\ar@/^/[l]^{\FD_{d'}},
}
\]
and its VK verification is based on mapping paths within the coproduct $\FD_1 + \FD_2$ and there is no way of minimizing the space for investigation.
\par 
A first step in simplifying colimit computation and VK verification can be found, if we consider coequalizers. They can be investigated within a smaller space: A coequalizer is not VK, if we can find for the corresponding diagram $\FD:\DA\to\pre{\B}$ a sort $X\in\B$ and an element $y\in\FD_1(X)$ such that $\FD_d(y)=\FD_{d'}(y)$, thus reducing investigations to $\FD_1$. However, even if $\FD_d$ and $\FD_{d'}$ do have sortwise disjoint images, VK may be violated. Note, that those image disjoint diagrams are exactly the diagrams we obtain when constructing pushouts, in the traditional way, by means of sums and coequalizer. To have VK we can require, in addition, that $\FD_d$ and $\FD_{d'}$ are monic, since then $[\FD_d, \FD_{d'}]$ is monic and hence the pushout along this mono is VK (because each topos is adhesive \cite{LS06, Sobo2004}). In these special cases, this provides again significant simplification. 
\par 
Even in the cases left unclear, the check of the VK property for coequalizers must not utilize the condition in Cor.\ref{cor:main}, which is based on the universal construction recipe \myref{eqn:colim-as-coeq1} for colimits\footnote{Note, that we could apply the universal construction recipe again to the diagram in \myref{eqn:colim-as-coeq1} and so on.}. Instead, we can use directly the condition in Corollary \ref{theo:VK-coeq}, i.e.\ we need to check the condition in Cor.\ref{cor:main} only for the case $i=j=2$, thus reducing investigations to $\FD_2$. We will see in the forthcoming parts that all these effects can often be used in more general cases to reduce analysis effort. 
\par 
Beside these effects, there may be components that do not contribute to the construction of the colimit at all. In many cases, this simplifies colimit construction, because it is not necessary to compute a quotient of the {\em entire} coproduct $\coprod_{j\in\I_0}\FD_j$. Moreover, we will investigate how certain further assumptions on the properties of arrows in $\FD$ (image-disjointness, injectivity) also simplify the algorithm. We will discuss some further examples to motivate the announced specialized and practical construction of colimits.
\par
Since the case of directed cycles can immediately be handled in practical situations\footnote{
\ldots where, presumably, components are finite artefacts \ldots}, we assume from now on that {\em $\I$ is finite and has no directed cycles}.

\paragraph{{\em Irrelevant components}:} For all indices in $\I$ with no incoming and exactly one outgoing edge, the corresponding component does not contribute to the construction of the colimit. Typical examples are 
\[
\xymatrix{
\FD_1\ar[r]^{\FD_d} & \FD_2
&& \FD_1\ar[r]^{\FD_{d_1}} & \FD_3 &  \FD_2\ar[l]_{\FD_{d_2}},
}
\]
(in an arbitrary category). For the left diagram $\FD_2$ can be taken as the colimit object and we can set $\kappa_2:=id_{\FD_2}$, $\kappa_1:=\FD_d$. For the right diagram $\FD_3$ can serve as colimit object and we have $\kappa_3:=id_{\FD_3}$, $\kappa_1:=\FD_{d_1}$, $\kappa_2:=\FD_{d_2}$.

\paragraph{{\em Jump-over components}:} For all indices in $\I$ with exactly one incoming and one outgoing edge we can jump over the corresponding component. As examples, we consider the diagrams
\[
\xymatrix{
\FD_0\ar[r]^{\FD_{d_1}} & \FD_1\ar[r]^{\FD_{d_2}} & \FD_2
&& \FD_3 & \FD_0\ar[r]^{\FD_{d_1}}\ar[l]_{\FD_{d_3}} & \FD_1\ar[r]^{\FD_{d_2}} &  \FD_2,
}
\]
(in an arbitrary category). For the left diagram, $\FD_2$ can be taken as the colimit object and we have $\kappa_2:=id_{\FD_2}$, $\kappa_1:=\FD_{d_2}$, $\kappa_0:=\FD_{d_2}\circ\FD_{d_1}$. For the right diagram the colimit object is obtained by the pushout of $\FD_{d_2}\circ\FD_{d_1}:\FD_0\to\FD_2$ and $\FD_{d_3}:\FD_0\to\FD_3$. The missing injections are given by $\kappa_1:=\kappa_2\circ\FD_{d_2}$ and $\kappa_0:=\kappa_2\circ\FD_{d_2}\circ\FD_{d_1}(=\kappa_3\circ\FD_{d_3})$.

\paragraph{{\em Minimal components}:} We consider diagrams for coequalizer and pushouts, respectively,
\[
\xymatrix{
\FD_1\ar@/^/[r]^{\FD_d}\ar@/_/[r]_{\FD_{d'}} &  \FD_2
&& \FD_1 & \FD_0\ar[l]_{\FD_d}\ar[r]^{\FD_{d'}} & \FD_2
}
\]
and the corresponding diagrams according to the universal recipe \myref{eqn:colim-as-coeq1}
\[
\xymatrix{
   \FD_1+\FD_1\ar@<-1ex>[rr]_{[\subseteq_2\circ\FD_d,\subseteq_2\circ\FD_{d'}]}
           \ar@<1ex>[rr]^{[\subseteq_1,\subseteq_1]}
&& \FD_1+\FD_2
&& \FD_0+\FD_0\ar@<-1ex>[rr]_{[\subseteq_1\circ\FD_d,\subseteq_2\circ\FD_{d'}]}
           \ar@<1ex>[rr]^{[\subseteq_0,\subseteq_0]}
&& \FD_0+\FD_1+\FD_2
}
\]
In case of coequalizer we construct, usually, a quotient of $\FD_2$ and not of $\FD_1+\FD_2$ and, in case of pushouts we construct a quotient of $\FD_1+\FD_2$ and not of $\FD_0+\FD_1+\FD_2$. Also in the example in Figure \ref{fig:example1} we factorize the sum $\FD_1+\FD_2+\FD_3$ and not the sum of all 6 components. In all three cases we build first the coproduct of all minimal components (see Def.~\ref{def:minimal-index}) and construct then a quotient of this restricted coproduct.
\begin{defi}[Minimal Components]\label{def:minimal-index}
For a finite directed multigraph $\I$ we denote by $Min(\I)$ the set of all {\em(local) minimal indices}, i.e., of all vertices in $\I_0$ without outgoing edges. For a diagram $\FD:\I\to\pre{\B}$ we say that $\FD_i$ is a {\em minimal component} if $i\in Min(\I)$.
\end{defi}
Since $\I$ is finite and has no directed cycles, each index is either minimal or there exists a non-empty finite sequence of edges to at least one minimal index. This fact will be  used several times in the sequel. To construct the colimit of a diagram with finite $\I$ with no directed cycles we need, in practice, only the minimal components as outlined below.

\paragraph{{\em Branching components}:} The essence in constructing the colimit of a diagram is to converge diverging branches in the diagram (in a minimal way). In presheaf topoi this can be done by sortwise identifying certain elements. What elements, however, have to be identified? 
\par
In case of coequalizers we have to identify for all sorts $X\in\B$ and all $y\in\FD_1(X)$ the two elements $\FD_d(y)$ and $\FD_{d'}(y)$ in $\FD_2(X)$. These primary identifications induce further identifications when we construct the smallest congruence in $\FD_2$ comprising all these primary identifications. For pushouts we have to identify for all $X\in\B$ and all $y\in\FD_0(X)$ the two elements $\FD_d(y)$ and $\FD_{d'}(y)$ seen as elements in $\FD_1(X)+\FD_2(X)$. In this case, we construct then the smallest congruence in $\FD_1+\FD_2$ comprising all the primary identifications.
\par
Now we look at slightly more general diagrams. First, we consider three parallel arrows.
\xymatrixrowsep{3pc}
\xymatrixcolsep{3pc}
\[
\xymatrix{
\FD_1\ar@/^1pc/[r]^{\FD_{d_1}}\ar[r]|{\FD_{d_2}}\ar@/_1pc/[r]_{\FD_{d_3}} &  \FD_2
}
\]
\xymatrixrowsep{1.5pc} 
\xymatrixcolsep{1.5pc}
In this case, we have to identify for all sorts $X\in\B$ and all $y\in\FD_1(X)$ the three elements $\FD_{d_1}(y)$, $\FD_{d_2}(y)$ and $\FD_{d_3}(y)$ in $\FD_2(X)$, and then we construct the smallest congruence in $\FD_2$ comprising these identifications. Second, we consider two generalizations of pushouts
\[
\xymatrix{
&  \FD_I\ar[ld]_{\FD_{is}}\ar[rd]^{\FD_{ib}} &
&& \FD_1 & \FD_{13}\ar[l]_{\FD_{d_{-13}}}\ar[r]^{\FD_{d_{13}}} &  \FD_3
\\
   \FD_S & \FD_P\ar[l]_{\FD_{m}}\ar[r]^{\FD_{r}} & \FD_B
&& \FD_{12}\ar[u]^{\FD_{d_{-12}}}\ar[r]^{\FD_{d_{12}}} & \FD_{2} &  \FD_{23}\ar[l]_{\FD_{d_{-23}}}\ar[u]_{\FD_{d_{23}}}
}
\]
A situation, as in the left diagram, appears, for example, if we want to avoid that the instantiation of a "parameterized specification" $\Mapping{\FD_P}{\FD_r}{\FD_B}$ via a "match" $\Mapping{\FD_P}{\FD_m}{\FD_S}$ generates two copies of a specification $\FD_I$ that had been imported as well by the "body" $\FD_B$ of the parameterized specification as by the "actual parameter" $\FD_S$.
The diagram on the right is taken from the example in Figure \ref{fig:example1}.

In the left diagram we have to identify for all $X\in\B$ and all $y\in\FD_P(X)$ the two elements $\FD_m(y)$ and $\FD_{r}(y)$ seen as elements in $\FD_S(X)+\FD_B(X)$. In addition, we have to identify for all $z\in\FD_I(X)$ the two elements $\FD_{is}(z)$ and $\FD_{ib}(z)$, again seen as elements in $\FD_S(X)+\FD_B(X)$.

In the right diagram, we have, analogously, that the elements in $\FD_{12}$ force identifications of elements in $\FD_1$ and $\FD_2$, seen as elements in $\FD_1+\FD_2+\FD_3$, the elements $\FD_{13}$ force identifications of elements in $\FD_1$ and $\FD_3$, seen as elements in $\FD_1+\FD_2+\FD_3$, and the elements in $\FD_{23}$ force identifications of elements in $\FD_2$ and $\FD_3$, seen as elements in $\FD_1+\FD_2+\FD_3$.

Generalizing the examples we want to coin the following definition.
\begin{defi}[Branching Components]\label{def:branching-index}
For a finite directed multigraph $\I$ we denote by $Br(\I)$ the set of all {\em branching indices}, i.e., of all indices with, at least, two outgoing edges. For a diagram $\FD:\I\to\pre{\B}$ we say that $\FD_i$ is a {\em branching component} if $i\in Br(\I)$.
\par
A sequence of edges $p=(i_0\stackrel{d_0}{\to}i_1\stackrel{d_1}{\to}\ldots\stackrel{d_{n-1}}{\to}i_n)$\; in $\I$  is called a {\em branch}, if $i_0\in Br(\I)$ and $i_n\in Min(\I)$.
\end{defi}
Note, that $Br(\I)$ and $Min(\I)$ are disjoint by definition. Thus any branch has at least length 1. Note further, that branching indices can be connected in $\I$, in contrast to minimal indices. To illustrate our discussion and definitions we consider a simple toy example.
\begin{exa}\label{ex:shape-graph}
Let $\I =$
\[
\xymatrix{
  1 \ar[r]^a & 2\ar[ld]_b & 4\ar[r]_c\ar@/^1pc/[rr]^e & 5\ar[r]_d & 6\ar[ld]^g\ar[d]^h
\\
  3 && 7\ar[u]^f & 8 & 9\ar[r]^l & 10
}
\]
Here we have $Min(\I)=\{3,8,10\}$, $Br(\I)=\{4,6\}$ and the four branches $(4\stackrel{c}{\to}5\stackrel{d}{\to}6\stackrel{g}{\to}8)$, $(4\stackrel{c}{\to}5\stackrel{d}{\to}6\stackrel{h}{\to}9\stackrel{l}{\to}10)$, $(4\stackrel{e}{\to}6\stackrel{g}{\to}8)$, and $(4\stackrel{e}{\to}6\stackrel{h}{\to}9\stackrel{l}{\to}10)$.

The indices $1$ and $7$ are irrelevant and we can jump over the indices $5$ and $9$. We can also jump over the index $2$, but since $1$ is irrelevant also $2$ becomes irrelevant. Be aware that the edges $c,d,e$ constitute an undirected  cycle in $\I$.
\end{exa}
Branching indices give rise to special positions in mapping paths: 
\begin{defi}[Branching Position in Proper Paths]\label{def:turning}
For a pair of subsequent segments 
\[(y_{j-1},d^{op},y_j),(y_j,d',y_{j+1})\]
of a proper mapping path $P$ (hence $d\not=d'$) the position $j$ is called a {\em branching position} of $P$. Consequently $\iota_j$ is a branching index of $\I$. 
\end{defi} 

\paragraph{{\em A specialized construction of colimits}:}
Now, we have everything at hand to describe a construction of colimits in presheaf topoi. Let $\I$ be a finite directed multigraph with no directed cycles. Then we can construct the colimit of a diagram $\FD:\I\to\pre{\B}$ as follows:
\begin{enumerate}
  \item Construct the coproduct $\coprod_{i\in Min(\I)}\FD_i$ (by sortwise coproducts in $Set$).
  \item \label{enum:spec-colim2} For each pair $p=(i\stackrel{d}{\to}\ldots\to j)$, $p'=(i\stackrel{d'}{\to}\ldots\to j')$ of branches in $\I$ with common source $i\in Br(\I)$ and $d\not=d'$, and for each sort $X\in\B$ there is the set
      \[\approx^{p,p'}_X:=\{(\subseteq_j(\FD_p(y)),\subseteq_{j'}(\FD_{p'}(y)))
                          \mid y\in\FD_i(X)\} 
      \]
      of pairs (primary identifications) in $\coprod_{i\in Min(\I)}\FD_i$.\footnote{
      Recall the previous definition of $\FD_p$ in Sect.~\ref{sec:cyclic} as the compositions of all $\FD_d$ with $d$ an arrow of path $p$
} 
Each pair is represented by a mapping path (called a {\em primary} mapping path) connecting the pair's components\footnote{It can be shown inductively over the path length that each pair can even be represented by a path {\em with exactly one branching position}.}. By $\approx_X$ we denote the union of all those sets $\approx^{p,p'}_X$ for sort $X$. This results in a family $\approx\,=(\approx_X)_{X\in\B}$ of binary relations in $\coprod_{i\in Min(\I)}\FD_i$.
  \item \label{enum:spec-colim3} Construct the smallest congruence $\cong\,=(\cong_X)_{X\in\B}$ in $\coprod_{i\in Min(\I)}\FD_i$ which comprises $\approx$ by enlargement with transitive (i.e.\ concatenation of primary mapping paths) and reflexive (empty mapping paths) closure\footnote{$\approx$ is already symmetric by definition and the transitive closure preserves symmetry. It is then easy to see that compatibility with operation symbols holds.}.
  \item Construct the colimit object as the sortwise quotient $(\coprod_{i\in Min(\I)}\FD_i)/\cong$ and get, in such a way, also the canonical morphisms $[\;]_\cong:\coprod_{i\in Min(\I)}\FD_i\to (\coprod_{i\in Min(\I)}\FD_i)/\cong$.
  \item\label{enum:spec-colim5} The colimiting cocone of diagram $\FD$ is given by
\begin{equation}\label{eqn:special-colimit-cocone}
\FD\stackrel{\kappa}{\Rightarrow}(\coprod_{i\in Min(\I)}\FD_i)/\cong
\end{equation}
where $\kappa_i := [\;]_{\cong}\circ \subseteq_i$ for all minimal indices $i\in Min(\I)$ and $\kappa_i:=\kappa_j\circ\FD_p$ for all other indices $i\in\I_0\setminus Min(\I)$, where $p=(i\to\ldots\to j)$ is an edge sequence from $i$ to $j\in Min(\I)$.
\end{enumerate}
 A detailed proof for the validity of \myref{eqn:special-colimit-cocone} is given in \cite{kw17}. Note, that the definition of $\kappa_i$ is independent of the choice of $p$ since we have, by construction, $\kappa_j\circ\FD_p=\kappa_{j'}\circ\FD_{p'}$ for all branches $p=(i\to\ldots\to j)$, $p'=(i\to\ldots\to j')$ in $\I$ with a common source.
\par
The main advantage of this construction is that mapping paths are now computed traversing branching components only. These components, however, are often just tiny "connectors" as in Fig.\ref{fig:example1}.  Note, that all the components $\FD_i$ with $i\in Min(\I)$ not being the target of any branch are not affected by the quotient construction, i.e.\ congruence classes $[z]_\cong$ are singletons for all $z\in \FD_i(X)$. In Example \ref{ex:shape-graph} this is the case for index $3$. Let $Af(\I)$ denote the set of all \underline{af}fected minimal indices. We could then be even more specific and construct the colimit object as
\begin{equation}\label{equ:affected-components}
(\coprod_{i\in Min(\I)\setminus Af(\I)}\FD_i)+(\coprod_{i\in Af(\I)}\FD_i)/\cong
\end{equation}

\subsection{Efficient Checking of the Van Kampen Property}\label{sec:VK-check-efficient}
Based on \myref{equ:affected-components} we can conclude, independent of Corollary \ref{cor:main}, that a diagram $\FD:\I\to\pre{\B}$ is VK if $\I$, in addition of being finite and having no directed cycles, does not have branching indices either. In this case, $Af(\I)$ is empty and the colimit of the diagram is simply given by the coproduct of all minimal components, thus the VK property of the diagram is ensured by the VK property of coproducts (extensivity) and pullback composition. In the presence of branching, however, we do not have VK for free. We have to check one of the conditions in Cor.\ref{cor:main}. 
\par 
The specialized colimit construction \myref{equ:affected-components} suggests another practical relevant possibility to reduce our effort for checking VK in case $\I$ has no directed cycles. In applications that deal with nets of software components (e.g. multimodels), there is usually only one type of relation between the components: Relations either specify sameness of model elements, versions of one model element in evolving environments, or elements to be preserved when applying transformation rules \cite{Ehrig2006-foagt}. Thus, rarely will it be the case that there are two or more morphisms in the same direction between two given components. An even weaker and also reasonable claim for two different relations is that they don't interfere in common codomains, thus the following definition is not too restrictive: 
\begin{defi}[Image-Disjointness]\label{def:image-disj}
A diagram $\FD:\I\to\pre{\B}$ is called {\em image disjoint}, if for each pair of different branches $p=(i\to\ldots\to j)$, $p'=(i\to\ldots\to j')$ in $\I$ starting in the same branching index $i$ and all elements $y\in\FD_i(X)$, $X\in\B$ we have $\FD_{p}(y)\not= \FD_{p'}(y)$.
\end{defi}
\begin{fact}\label{fact5}
If $\FD$ is not image-disjoint, the colimt  $\FD\Rightarrow\Delta S$ does not have the Van Kampen property. 
\end{fact}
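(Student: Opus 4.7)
The plan is to exhibit, under the failure of image-disjointness, two different proper mapping paths connecting the same pair of elements and then invoke Corollary~\ref{cor:main} to rule out the Van Kampen property.

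First I would unpack the assumption. If $\FD$ is not image-disjoint, then by Definition~\ref{def:image-disj} there are two different branches $p=(i\stackrel{d_0}{\to}i_1\to\ldots\to i_n=j)$ and $p'=(i\stackrel{d'_0}{\to}i'_1\to\ldots\to i'_m=j')$ starting at a common branching index $i\in Br(\I)$, a sort $X\in\B$, and an element $y\in\FD_i(X)$ with $\FD_p(y)=\FD_{p'}(y)$. Since all components $(\FD_k(X))_{k\in\I_0}$ are assumed disjoint (Sect.~\ref{sect:prelim}) and $\FD_p(y)\in\FD_j(X)$, $\FD_{p'}(y)\in\FD_{j'}(X)$, we must have $j=j'\in Min(\I)$; call the common image $z$.

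Next I would build the two primary mapping paths from $y$ to $z$ associated with the branches. Setting $y_0:=y$, $y_{k+1}:=\FD_{d_k}(y_k)$, the sequence
\[P:=[(y_0,d_0,y_1),(y_1,d_1,y_2),\ldots,(y_{n-1},d_{n-1},y_n=z)]\]
is a mapping path of sort $X$ connecting $y$ and $z$, and analogously for $p'$ one obtains a path $P'$ connecting $y$ and $z$. Because $\I$ has no directed cycles, no edge can recur inside a single directed edge sequence, so the edges $d_0,\ldots,d_{n-1}$ (resp.\ $d'_0,\ldots,d'_{m-1}$) are pairwise distinct. All segments of $P$ point forward, so two of its segments can be weakly equal only if they are equal, which would force two occurrences of the same edge; hence $P$ is proper, and similarly $P'$ is proper.

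Finally I would verify that $P\neq P'$: since $p\neq p'$, the edge sequences differ already in their first position where they diverge, so at the corresponding index the middle components of the segments of $P$ and $P'$ do not match, making the paths segment-wise different. Thus $P$ and $P'$ are two different proper mapping paths connecting $y$ and $z$, and Corollary~\ref{cor:main}, condition~(2), yields the failure of the Van Kampen property. The only point that needs genuine care is the properness argument, which is where the standing hypothesis that $\I$ is finite and acyclic (in the directed sense) enters; all other steps are immediate unpackings of the definitions.
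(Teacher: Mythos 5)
Your proposal is correct and follows the same route as the paper: the paper's proof is exactly the one-line observation that the two branches $p,p'$ with $\FD_p(y)=\FD_{p'}(y)$ yield two different mapping paths from $y$ to $\FD_p(y)$, after which Corollary~\ref{cor:main} applies. You merely fill in the (correct) details the paper leaves implicit, namely that the two forward-only branch paths are proper under the standing acyclicity assumption on $\I$ and that they differ at the first position where the branches diverge.
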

\begin{proof}
The two different branches $p,p'$ and $y\in \FD_i(X)$ with $\FD_{p}(y)= \FD_{p'}(y)$ yield two different mapping paths from $y$ to $\FD_{p}(y)$. Thus the result follows from Cor.\ref{cor:main}. 
\end{proof}
If there are no undirected cycles in $\I$, then we have image disjointness for free, because we always assume that all components $\FD_i$ are pairwise disjoint. If there are undirected cycles in $\I$, as in the case of coequalizers, for example, it can happen that $j=j'$. Thus we have to test, first, for image disjointness before the ''different paths criterion for paths connecting affected minimal components'' below can be applied. Note, that image disjointness implies that we have $\FD_{p}(y)\not= \FD_{p'}(y)$ for all $y\in\FD_i(X)$, $X\in\B$ not only for branches but for arbitrary pairs of paths $p=(i\to\ldots\to j)$, $p'=(i\to\ldots\to j')$ starting in a common branching index $i\in Br(\I)$ but not necessarily ending at a minimal index.
\begin{thm}[Different Paths Connecting Affected Minimal Components]\label{theo:main_minimal}
Let $\G = \pre{\B}$ and $\FD:\I\to\G$ be a diagram with $\I$ a finite directed multigraph without directed cycles. Let
\[\FD\stackrel{\kappa}{\Rightarrow} \Delta S\]
be a colimiting cocone with image-disjoint $\FD$. 
The following are equivalent: 
\[\begin{array}{cll}
(1) & \mbox{The cocone has the Van Kampen property} & \\
(2) & \forall X\in \B, i,j\in Af(\I), z\in\FD_i(X), z'\in\FD_j(X): & 
\mbox{There are no two different proper } \\
&& \mbox{paths from $z$ to $z'$}\\
(3) & \forall X\in \B, i,j\in Af(\I), z\in\FD_i(X), z'\in\FD_j(X): & 
\mbox{There are no two different inner-cycle } \\
&& \mbox{free proper paths from $z$ to $z'$} \hfill  
\end{array}
\]
\end{thm}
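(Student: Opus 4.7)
The plan is to prove the cycle $(1) \Rightarrow (2) \Rightarrow (3) \Rightarrow (1)$. The implication $(1) \Rightarrow (2)$ follows at once from Corollary~\ref{cor:main}: its path-uniqueness condition, valid for all $i,j\in\I_0$, specialises to~$(2)$ upon restricting to $i,j\in Af(\I)\subseteq\I_0$. The implication $(2)\Rightarrow(3)$ is trivial, since inner-cycle free proper paths form a subclass of proper paths, so uniqueness over the larger class entails uniqueness over the smaller one.

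The technical content resides in $(3)\Rightarrow(1)$, which I would prove by contraposition. If the cocone is not Van Kampen, Corollary~\ref{cor:main}(4) supplies two disjoint inner-cycle free proper paths $P_1\neq P_2$ from some $z\in\FD_i(X)$ to some $z'\in\FD_j(X)$, for arbitrary $i,j\in\I_0$ and $X\in\B$. The strategy is to \emph{extend both endpoints forward to affected minimal indices}: since $\I$ is finite and has no directed cycles, every index admits a forward edge sequence terminating in some minimal index, and for each endpoint $k\in\{i,j\}$ one chooses such a sequence $p_k = (k\stackrel{d_0}{\to}\cdots\stackrel{d_{n-1}}{\to} k^\ast)$ with $k^\ast\in Af(\I)$. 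Concatenating $P_r$ on the left with the inverse segments along $p_i$ and on the right with the forward segments along $p_j$ produces extended paths $P_r'$ that connect $\FD_{p_i}(z)\in\FD_{i^\ast}(X)$ with $\FD_{p_j}(z')\in\FD_{j^\ast}(X)$ and remain different, since the added prefix and suffix are shared between $P_1'$ and $P_2'$. Their existence contradicts~$(3)$.

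Two subclaims underpin this construction. First, forward sequences into $Af(\I)$ are always available whenever two distinct inner-cycle free proper paths start at $z$: if no forward chain from $k$ meets any branching index, the forward cone of $k$ is a non-branching linear chain terminating at some $k^\ast\in Min(\I)\setminus Af(\I)$, and the entire backward-convergence subgraph of $k^\ast$ inherits the non-branching property (otherwise a branching ancestor would place $k^\ast$ in $Af(\I)$). A structural analysis of this subgraph confines any mapping path out of $z$ to it, and within a non-branching graph every proper mapping path is uniquely determined by its successive edge-and-direction choices, contradicting the existence of two distinct such paths. Hence some forward chain from $k$ must enter a branching index, from which a branch into $Af(\I)$ is immediate by definition.

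Second, $P_1'$ and $P_2'$ must remain inner-cycle free and proper. Properness can fail only when a newly added segment is weakly equal to some segment of $P_r$. This is exactly where image-disjointness enters: it forbids distinct outgoing branches from a common branching index to coincide on images, so $p_i$, $p_j$ can be chosen to avoid collisions with $P_r$ whenever the source has alternative outgoing edges. In the residual non-branching situations any weakly equal pair introduced at the extension boundary forms a local inner cycle that the reduction to inner-cycle free paths (cited before Corollary~\ref{cor:main}) eliminates while acting on each $P_r'$ independently, so that distinctness of $P_1'$ and $P_2'$ is preserved. The main obstacle I foresee lies in the first subclaim: rigorously proving that a non-branching forward cone forbids duplication of inner-cycle free proper paths, and then verifying that the extension-and-reduction procedure indeed preserves the required distinctness of $P_1'$ and $P_2'$.
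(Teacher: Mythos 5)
The paper itself gives no proof of Theorem~\ref{theo:main_minimal} in the text --- it defers entirely to the technical report \cite{kw17} --- so I can only judge your argument on its own terms. Your handling of $(1)\Rightarrow(2)$ via Corollary~\ref{cor:main} and of $(2)\Rightarrow(3)$ by inclusion of path classes is correct, and your first subclaim for $(3)\Rightarrow(1)$ is essentially sound: if no forward chain from an endpoint index meets a branching index and its terminal minimal index is unaffected, then the set of indices reaching that terminal index is closed under in- and out-edges, all its vertices have at most one outgoing edge, the induced element graph on $\coprod_v\FD_v(X)$ is a forest, and two distinct proper paths out of $z$ cannot exist. That part can be made rigorous along the lines you indicate.

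The genuine gap is the final step: you assert that the extension-and-reduction procedure ``preserves distinctness of $P_1'$ and $P_2'$,'' but this is precisely what fails for naive choices and is nowhere established. Collisions at the extension boundary are unavoidable in general --- for the span $0\stackrel{d}{\to}1$, $0\stackrel{d'}{\to}2$ with $\FD_0=\{x,y\}$, $\FD_1=\{a\}$, $\FD_2=\{b\}$, the two disjoint proper paths from $x$ to $y$ use $d$ and $d'$ respectively, and any extension must reuse $d$ or $d'$, creating weakly equal pairs. Worse, in the case $z=z'$ with $P_1=[\,]$ and $P_2$ a non-empty proper cycle at $z$, taking $p_i=p_j$ makes $P_1'$ reduce to the empty path, while $P_2'$ contains an inner cycle spanning all of $P_2$ (the element $z$ recurs at the two ends of the inserted block), so one admissible reduction collapses $P_2'$ to the empty path as well: both residues coincide and no violation of $(3)$ is produced. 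Obtaining a witness requires excising a \emph{different} inner cycle of $P_2'$ (one leaving a non-empty proper cycle at the minimal level), and nothing in your sketch controls which cycle the reduction removes. Relatedly, your invocation of image-disjointness --- that it lets one ``choose $p_i,p_j$ to avoid collisions'' --- does not follow from Definition~\ref{def:image-disj}, which constrains images of elements under distinct branches, not the existence of collision-free edge sequences. So the architecture is plausible, but the distinctness-preservation argument, which is the actual content of $(3)\Rightarrow(1)$, is missing, as you yourself concede.
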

\begin{proof}
The proof of Theorem \ref{theo:main_minimal} is rather elementary and is given in \cite{kw17}.
\end{proof}
According to this theorem and according to \myref{equ:affected-components}, it is only necessary for an algorithm to iterate over branching components and compute paths into potentially affected minimal components. For instance, one has to consider only the small components $\FD_{12}, \FD_{13}, \FD_{23}$ in Fig.\ref{fig:example1}. Again the implementation is independent of whether it ignores inner-cycle free paths or not. It is, however, not guaranteed to find disjoint paths, if VK is violated.
\par
The absence of directed cycles and the presence of image-disjointness are reasonable requirements for many practical use-cases. But circumstances can often be further narrowed. In the rest of this section we consider some other possibly satisfied properties and corresponding alternative checking methods which can simplify VK verification. 
\paragraph{{\em Monomorphisms}:} In some practical cases, the morphisms of $\FD$ specify relations between components $\FD_i$ and $\FD_j$ such that an element in $\FD_i$ is related to at most one element of $\FD_j$. In this case all the morphisms $\FD_d$, with $d$ an edge in $\I$, are monomorphisms. In such a diagram any mapping path $P$ is completely determined by $y_0$ (or $y_n$) and the corresponding sequence $[\delta_0,...,\delta_{n-1}]$ of edges and opposed edges in $\I$. Due to Theorem \ref{theo:main_minimal}, the diagram may be not VK only if there are two different sequences of edges and opposed edges between two distinct affected indices in $\I$. As long as there are no undirected cycles in $\I$ this can not happen, thus the diagram is VK, if there are no undirected cycles in $\I$.
\par
If there are undirected cycles in $\I$ it is surely not enough to require image-disjointness as defined in Def.\ref{def:image-disj}, see also Fig.\ref{fig:example1}. Instead, we can ensure VK by the stronger requirement that all the undirected cycles in $\I$ are broken in $\FD$: An undirected cycle of edges in $\I$ is {\bf broken in} $\FD$ if for one of the situations  
\[ \ldots \stackrel{d_{n-1}}{\longrightarrow}\cdots\stackrel{d_0}{\longrightarrow}
   \stackrel{d'_0}{\longleftarrow}\cdots\stackrel{d'_{m-1}}{\longleftarrow}\ldots
\]
in the edge sequence with $1\leq n,m$ the morphisms $\FD_{d_0}\circ\ldots\circ\FD_{d_{n-1}}$ and $\FD_{d'_0}\circ\ldots\circ\FD_{d'_{m-1}}$ are image disjoint.

In the example in Figure \ref{fig:example1} this condition is not satisfied. In the example "parametrized specification with import", however, it is quite natural that $\FD_{ib}$ and $\FD_r$ are image disjoint since the "imported component" $\FD_I$ is not part of the "parameter component" $\FD_p$.

\subsection{Decision Diagram and VK Verification Algorithm}\label{sec:dec-diagram}
\begin{figure}[t]
\begin{small}
  \begin{center}
  $\xymatrix{
\ar[r]^(0.2){\txt{start}}
  &  \txtnode{18mm}{Directed cycles in $\I$?}
    \ar[d]^(.5){\txt{yes}} \ar[r]^{\txt{no}}
  & \txtnode{18mm}{Branching in $\I$?}\ar@/^2pc/[ddr]^(.6){\txt{no}} \ar[r]^{\txt{yes}}  
  & \txtnode{18mm}{$\FD$ image-disjoint?}\ar[r]^(.5){\txt{yes}} \ar@/_2pc/[ddl]|(0.32)\hole_(.6){\txt{no}}
  & \txtnode{20mm}{Only monic's in $\FD$?}\ar[d]^(.5){\txt{yes}} \ar@/^4pc/[dd]^{\txt{no}}
   \\
& \txtnode{20mm}{One $\FD_i(X)$ on the cycle finite?}\ar[dr]^(.5){\txt{yes}} \ar[d]^{\txt{no}}
  &  
  & 
  & \txtnode{22mm}{All undirected cycles broken in $\I$?}\ar[dl]^{\txt{yes}} \ar[d]^{\txt{no}}
\\
&  \txtnode{15mm}{Apply Cor.\ref{cor:main}} & \txtnode{20mm}{\bf not VK} & 
\txtnode{12mm}{\bf VK}& \txtnode{15mm}{Apply Thm.\ref{theo:main_minimal}} 
  }$\\
  \end{center}
\end{small}
  \caption{Decision diagram}
  \label{fig:decision-diagram}
\end{figure}
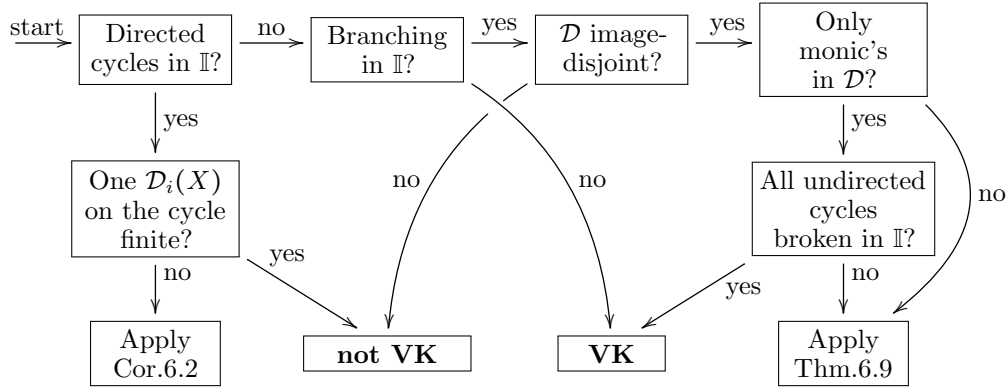
In practical cases, as outlined e.g.\ in Sect.\ref{sec:appl}, colimit computation is obligatory. Verification of the Van Kampen property must follow, if we want to verify compositionality. It would thus be a nice side effect to have a possibility to check VK simultanously with colimit computation such that there is no increase in time complexity! We will now shortly discuss, that with the results gained so far, this is indeed possible.
\par
For this, let's summarize the outcome of the previous sections as a decision algorithm, see Fig.\ref{fig:decision-diagram}. With the exception of rare cases in which there is a directed cycle in $\I$ whose component carrier sets are all infinite, it is possible to easily reach an early decision, if either there are directed cycles in $\I$ or if there is no branching in $\I$. This analysis is restricted to the small graph $\I$. In the presence of branching, the natural next step is to check violation of image-disjointness in $\FD$ to immediately deduce violation of VK (Fact \ref{fact5}). Image-disjointness can immediately be confirmed, if all branches diverge. Otherwise, the mapping behavior along branches has to be investigated, which may be more costly.   
\par 
Hence the {\em combined algorithm for colimit computation and Van Kampen verification} comprises the following steps:
\begin{enumerate}
  \item Preprocessing of the data shows whether we are on a decision route in Fig.\ref{fig:decision-diagram} on which Thm.\ref{theo:main_minimal} will be applied. In this case there are {\em no directed cycles in} $\I$ and $\FD$ is {\em image-disjoint}.    
  \item $vk:=true$;
  \item $\cong_X:=\{(z,z) \mid z\in \FD_j(X)$, $j\in Min(\I)\}$ for all $X\in\B$;
  \item For each branching component $\FD_i$, each $X\in\B$ and for each $y\in \FD_i(X)$, do: 
  \begin{enumerate}
    \item Add images $(z,z')$ to $\cong_X$ according to primary identifications in step \ref{enum:spec-colim2} in the specialized colimit computation. 
    \item Keep $\cong_X$ transitive by adding all arising transitive pairs from the last enhancement.   
    \item Whenever in the two previous steps a pair $(z,z')$ is added for the second time, $vk:=false$ (cf.\ Theorem \ref{theo:main_minimal}).
  \end{enumerate} 
  \item Compute colimit cocone $\kappa$ as in \myref{eqn:special-colimit-cocone} using the family  $\cong=(\cong_X)_{X\in\B}$.
  \item Return $(\kappa,vk)$ 
\end{enumerate}





\section{Conclusion and Future Work}\label{sec:conc}
In general, arbitrary diagrams in arbitrary categories are not VK. Even if we restrict to presheaf topoi, many diagrams are not VK. In the paper we presented a feasible condition (Thm. \ref{theo:main}) to check if a diagram in a presheaf topos is VK or not.
\par
As suggested by the example in Sect.\ref{sec:appl}, modelers may well work with a non-VK-diagram (of software models), if they have a common understanding of the used natural transformation $\tau:\FE\Rightarrow \FD$, i.e., if they know how to avoid "twisting anomalies" as shown in the example. Hence, the natural next step will be to look for feasible conditions that a given $\tau:\FE\Rightarrow \FD$ is in the image of $\kappa^\ast$, even if the diagram is not VK. We may allow non-uniqueness of  mapping paths in diagrams of models, but then paths in the diagram of instances have to be exact copies of them, i.e., path liftings from models to instances must behave like discrete fibrations. It is worth to underline that the instances we get from a given "indexed semantics" via a corresponding variant of the Grothendieck construction \cite{WD2007} are always contained in the image of $\kappa^\ast$ up to isomorphism.
\par
An interesting research direction arises from counter-examples in Sect. \ref{sec:counterex}. It seems to be easy to find categories, where necessity of the Van Kampen property is violated although path uniqueness holds. Although being artificial and practically not relevant, the example showing violation of path uniqueness despite validity of VK is interesting: there do not seem to be other substantially different examples of this type. Is it possible to have violation of path-uniqueness and still validity of the Van Kampen property in more practical examples? We conjecture that the implication ''VK $\Rightarrow$ Path-Uniqueness'' is very natural and holds in a wider variety of  (set-based) categories. 
\par 
For topologists being familiar with homotopy theory \cite{MayAlgTop}, violation of path uniqueness strongly resembles (continuous) paths that can not be contracted to a point. Hence, an interesting further research direction is to investigate, how path lifting (e.g.\ of covering morphisms) is connected with our investigations. Moreover, discrete unique path lifting is discussed in the theory of (split) fibrations \cite{St1999}. The ultimate goal, however, is to find a categorical counterpart for the path-uniqueness criterion (Theorem \ref{theo:main}), which states a necessary and sufficient condition for validity of the Van Kampen property in more general categories. Is such a condition significantly different from the bilimit condition mentioned in the introduction and the universal property in \cite{VKAsBicolimits} and can we benefit from results of higher order category theory, e.g.\ \cite{Lurie}?


\bibliographystyle{plainurl}

 
\end{document}